\let\reftagform@=\tagform@
\def\tagform@#1{\maketag@@@{(\ignorespaces\textcolor{blue}{#1}\unskip\@@italiccorr)}}
\renewcommand{\eqref}[1]{\textup{\reftagform@{\ref{#1}}}}
\newtheorem{theorem}{Theorem}
\theoremstyle{plain}
\newtheorem{corollary}{Corollary}
\newtheorem{example}{Example}
\newtheorem{lemma}{Lemma}
\newtheorem{remark}{Remark}
\numberwithin{equation}{section}
\DeclareMathOperator{\tri}{tridiag}
\DeclareMathOperator{\spe}{sp}
\def\etal{et al.\,}
\begin{document}
	\title[Bounding the zeros of polynomials]{Bounding the zeros of polynomials using the Frobenius companion matrix partitioned by the Cartesian decomposition}
	\author[M.W. Alomari ]{Mohammad W. Alomari} 
	\address {Department of Mathematics, Faculty of Science and Information	Technology, Irbid National University,  P.O. Box 2600, Irbid, P.C. 21110, Jordan.}
	\email{mwomath@gmail.com}

	\begin{abstract}
		In this work, some new  inequalities for the numerical radius of block $n$-by-$n$ matrices are presented. As an application, bounding of zeros of polynomials using the Frobenius companion matrix partitioned by the Cartesian decomposition method is proved and affirmed by several numerical examples showing that our approach of bounding zeros of polynomials could be very effective in comparison with the most famous and some recent results presented in the field.  
	\end{abstract}
	
	\keywords{Numerical radius, Operator matrix, Zeros of polynomials }
	\subjclass[2010]{Primary 47A12, secondary 47A30, 47A63, 15A45}
	
	\maketitle	
	
	\section{Introduction}\label{intro}

	Let $\mathscr{H}$ be a complex Hilbert space  with an inner
	product $\langle \cdot ,\cdot \rangle$ and $\mathscr{B(H)}$ be the $C^*$-algebra of all bounded linear
	operators from  $\mathscr{H}$ into itself. When $\mathscr{H} =
	\mathbb{C}^n$, we identify $\mathscr{B}\left( \mathscr{H}\right)$
	with the algebra $\mathscr{M}_{n}$ of $n$-by-$n$ complex
	matrices.  
	For a bounded linear operator $T$ on a Hilbert space
	$\mathscr{H}$, the numerical range $W\left(T\right)$ is the image
	of the unit sphere of $\mathscr{H}$ under the quadratic form $x\to
	\left\langle {Tx,x} \right\rangle$ associated with the operator.
	More precisely,
	\begin{align*}
		W\left( T \right) = \left\{ {\left\langle {Tx,x} \right\rangle :x
			\in \mathscr{H},\left\| x \right\| = 1} \right\}.
	\end{align*}
	Also, the numerical radius is defined to be
	\begin{align*}
		w\left( T \right) = \sup \left\{ {\left| \lambda\right|:\lambda
			\in W\left( T \right) } \right\} = \mathop {\sup }\limits_{\left\|
			x \right\| = 1} \left| {\left\langle {Tx,x} \right\rangle }
		\right|.
	\end{align*}
	
	The spectral radius of an operator $T$ is defined to be
	\begin{align*}
		r\left( T \right) = \sup \left\{ {\left| \lambda\right|:\lambda
			\in \spe\left( T \right) } \right\}.
	\end{align*}
	
	We recall that,  the usual operator norm of an operator $T$ is defined to be
	\begin{align*}
		\left\| T \right\| = \sup \left\{ {\left\| {Tx} \right\|:x \in H,\left\| x \right\| = 1} \right\},
	\end{align*}
	Several numerical radius type inequalities improving and refining
	the inequality   
	\begin{align*}
		\frac{1}{2}\left\|S\right\|\le w\left(S\right) \le \left\|S\right\| \qquad\qquad (S\in \mathscr{B}\left( \mathscr{H}\right))
	\end{align*}
	have been recently obtained by many
	other authors see for example \cite{AF1}--\cite{BF},  and \cite{HD}. 
	Four important
	facts concerning the  numerical radius inequalities   of $n \times
	n$  operator matrices are obtained by different authors which are
	grouped together, as follows:\\
	
	Let  ${\bf S}=\left[S_{ij}\right]\in \mathscr{B}\left(\bigoplus _{i = 1}^n \mathscr{H}_i\right)$ such that $S_{ij}\in\mathscr{B}\left(\mathscr{H}_j, \mathscr{H}_i\right)$. Then
	\begin{align*}
		w\left({\bf S}\right)\le
		\left\{ \begin{array}{l}
			w \left( {\left[ {t_{kj}^{\left( 1 \right)} } \right]} \right),\qquad {\rm{Hou \,\&\, Du \,\,in}\,\,}\text{\cite{HD}}  \\
			\\
			w \left( {\left[ {t_{kj}^{\left( 2 \right)} } \right]} \right) ,\qquad {\rm{BaniDomi \,\&\, Kittaneh \,\,in}\,\,} \text{\cite{BF}}\\ 
			\\
			w \left( {\left[ {t_{kj}^{\left( 3 \right)} } \right]} \right),\qquad  {\rm{AbuOmar \,\&\, Kittaneh \,\,in}\,\,} \text{\cite{AF1}}\\
			\\
			w \left( {\left[ {t_{kj}^{\left( 4 \right)} } \right]} \right) ,\qquad {\rm{AbuOmar \,\&\, Kittaneh \,\,in}\,\,} \text{\cite{AF1}}
		\end{array} \right.;
	\end{align*}
	where
	\begin{align*}
		t_{kj}^{\left( 1 \right)}  &= w \left( {\left[ {\left\| {S_{kj} } \right\|} \right]} \right);
		\qquad
		t_{kj}^{\left( 2 \right)}   = \left\{ \begin{array}{l}
			\frac{1}{2}\left( {\left\| {S_{kj} } \right\| + \left\| {S_{kj}^2 } \right\|^{1/2} } \right),\,\,\,\,\,\,\,k = j
			\\
			\left\| {S_{kj} } \right\|,\qquad\qquad\qquad\,\,\,\,\,\,\,\,\,\,k \ne j
		\end{array} \right.;
		\\
		t_{kj}^{\left( 3 \right)}  &= \left\{ \begin{array}{l}
			w \left( {S_{kj} } \right),\,\,\,\,\,\,\,k = j
			\\
			\left\| {S_{kj} } \right\|,\,\,\,\,\,\,\,\,\,\, k\ne j
		\end{array} \right.;
		\qquad
		t^{\left( 4 \right)}_{kj} =  \left\{ \begin{array}{l}
			w\left( {T_{kj} } \right), \,\,\,\,\,\,\,\,k = j \\
			\\ 
			w\left( {\begin{array}{*{20}c}
					0 & {T_{kj}  }  \\
					{  T_{jk}  } & 0  \\
			\end{array}} \right),\,\,\,\,\,\,\,\,\,k \ne j \\ 
		\end{array} \right..
	\end{align*}
	Clearly, the third and fourth bounds above are gentle refinements of the first and  second bounds, and therefore both $w \left( {\left[ {t_{kj}^{\left( 3 \right)} } \right]} \right)$ and $w \left( {\left[ {t_{kj}^{\left( 4 \right)} } \right]} \right)$
	gives better upper estimates for the numerical radius of ${\bf S}=\left[S_{ij}\right]$.\\
	
	Let $T_n$ be the tridiagonal Toeplitz matrix denoted by $T_n=\tri\left(b,a,c\right)$; i.e.,  
	\begin{align*}
		T_n:=
		\left[\begin{array}{*{20}c}
			a & c & 0  &  \cdots  & 0  
			\\
			b &a &c & \ddots &  \vdots   
			\\
			0 & b&  \ddots  &  \ddots  & 0  
			\\
			\vdots  &  \ddots  &  \ddots  &  \ddots  &c  
			\\
			0  &  \cdots  & 0 & b & a \\
		\end{array}\right]_{n\times n}, \qquad \qquad n\ge2.
	\end{align*}
	It is well known that the eigenvalues of $T_n$ are given by \cite{NPR}:
	\begin{align*}
		\lambda_k = a+2\sqrt{|bc|} \cos\left(\frac{k\pi}{n+1}\right), \qquad k=1,2,\cdots, n
	\end{align*}
	and it have the polar form 
	\begin{align*}
		\lambda_k = a+2\sqrt{|bc|  {\rm{e}}^{i\left( {\theta {\rm{ + }}\phi } \right)/2}\cos\left(\frac{k\pi}{n+1}\right)}, \qquad k=1,2,\cdots, n
	\end{align*}
	where $\theta = \arg\left(b\right)$ and $\phi=\arg\left(c\right)$. In case that, $bc\ne0$, then $T_n$ has $n$ simple eigenvalues, all of them lie in the closed segment
	\begin{align*}
		S_{n,\lambda}=\left\{ a + t 
		{{\rm{e}}^{i\left( {\theta {\rm{ + }}\phi } \right)/2} } :t\in \mathbb{R}, \left|t\right|\le  2\sqrt{|bc|} \cos\left(\frac{\pi}{n+1}\right) \right\} \subset{\mathbb{C}}
	\end{align*}
	The eigenvalues are located symmetrically with respect to $a$. Thus,
	the spectral radius of $T_n$ is given by 
	\begin{align}
		\label{eq1.1}	r\left( {T_n } \right) = \max \left\{ {\left| {a + 2\sqrt {\left| {bc} \right|} 
				{{\rm{e}}^{i\left( {\theta {\rm{ + }}\phi } \right)/2} }
				\cos \left( {\frac{\pi }{{n + 1}}} \right)} \right|,\left| {a + 2\sqrt {\left| {bc} \right|} {\rm{e}}^{i\left( {\theta {\rm{ + }}\phi } \right)/2} \cos \left( {\frac{{n\pi }}{{n + 1}}} \right)} \right|} \right\}. 
	\end{align}
	Moreover, if $bc\ne0$, then the eigenvectors  $x_k=\left[x_{1,n},x _{2,n},\cdots,x_{k,n}\right]^T$ associated
	with the eigenvalue $\lambda_k$ of $T_n$ are given in the form  $x_{k,j}=\left(\frac{c}{b}\right)^{k/2}\sin\left(\frac{kj\pi}{n+1}\right)$, $k,j=1,2,\cdots,n$.
	For comprehensive study about Toeplitz matrices the reader may refer to the interesting book \cite{BG}.\\
	
	The following result is of great interest in the next presented results \cite{NPR}.
	\begin{lemma}
		\label{lem1}	The tridiagonal Toeplitz matrix $T_n=\tri\left(b,a,c\right)$ is normal (i.e., $T^*_nT_n=T_nT^*_n$) if and only if $\left|b\right|=\left|c\right|$. 
	\end{lemma}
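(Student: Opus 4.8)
The plan is to prove the equivalence by computing the self-commutator $C := T_n T_n^* - T_n^* T_n$ explicitly and reading off exactly when it vanishes. Since $T_n = \tri(b,a,c)$, its adjoint is $T_n^* = \tri\left(\bar c, \bar a, \bar b\right)$, obtained by transposing (which interchanges the sub- and super-diagonals) and conjugating. Both $T_n T_n^*$ and $T_n^* T_n$ are pentadiagonal, so $C$ can have nonzero entries only on the main diagonal and on the diagonals at distance one and two from it; everything farther out vanishes because the supports of the relevant rows and columns fail to overlap. I would therefore compare the two products entry by entry within this band.

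First I would compute the diagonal entries. The $(i,i)$ entry of $T_n T_n^*$ is the squared norm of the $i$-th row of $T_n$, while the $(i,i)$ entry of $T_n^* T_n$ is the squared norm of the $i$-th column. For an interior index $2 \le i \le n-1$ both the row and the column contain all three of $a, b, c$, so these entries coincide and contribute nothing to $C$. The subtlety lies at the two corners: row $1$ carries only $a, c$ whereas column $1$ carries only $a, b$, and symmetrically at index $n$. Consequently the $(1,1)$ entry of $C$ equals $|c|^2 - |b|^2$ and the $(n,n)$ entry equals $|b|^2 - |c|^2$, while every interior diagonal entry is zero.

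The step I expect to require the most care is checking that the off-diagonal band of $C$ vanishes identically, for \emph{every} choice of $a,b,c$. For the entries at distance two, the two defining sums overlap in a single middle index, and both products return the same value $c\bar b$. For the entries at distance one, exactly two indices overlap, and a short computation shows both products equal $a\bar b + c\bar a$; the boundary rows and columns need a moment of attention but produce no discrepancy. Hence $C$ collapses to the purely diagonal matrix $\diag\left(|c|^2 - |b|^2,\, 0,\, \dots,\, 0,\, |b|^2 - |c|^2\right)$.

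Finally I would conclude that $C = 0$ if and only if $|c|^2 - |b|^2 = 0$, that is, if and only if $|b| = |c|$, which is precisely the stated normality criterion. The main obstacle here is not conceptual but organizational: the whole argument hinges on keeping the index ranges and boundary terms straight, so that the vanishing of the entire off-diagonal band and the nonvanishing only at the two corner positions are both established cleanly.
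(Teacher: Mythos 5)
Your proof is correct, and it is worth noting that the paper itself offers no proof of this lemma at all: it is stated as a known result imported from the reference on tridiagonal Toeplitz matrices (the Noschese--Pasquini--Reichel paper cited there), so there is no internal argument to compare against. Your direct computation of the self-commutator $C = T_nT_n^* - T_n^*T_n$ is a clean, self-contained replacement for that citation. All the key points check out: with $T_n = \tri(b,a,c)$ one has $T_n^* = \tri(\bar c,\bar a,\bar b)$; the distance-two entries of both products reduce to the single overlap term $c\bar b$; the distance-one entries of $T_nT_n^*$ and $T_n^*T_n$ are $a\bar b + c\bar a$ and $\bar a c + \bar b a$ respectively, which coincide since each term is a product of a number with a conjugate of another (so the boundary rows and columns indeed cause no discrepancy); and the diagonal comparison of row norms versus column norms isolates the defect at the two corners, giving
\begin{align*}
C = \diag\left(\,|c|^2 - |b|^2,\; 0,\; \dots,\; 0,\; |b|^2 - |c|^2\,\right),
\end{align*}
which vanishes precisely when $|b| = |c|$. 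One small presentational point: your argument also covers the case $n = 2$ (where there are no interior indices and no distance-two band), so you may as well state that the computation is valid for all $n \ge 2$, matching the paper's standing hypothesis on $T_n$.
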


	\begin{lemma}
		\label{lem2}	Let $\mathscr{H}_1$ and $\mathscr{H}_2$ be Hilbert spaces, and ${\bf{T}} = \left[ {\begin{array}{*{20}c}
				A & B  \\
				C & D  \\
		\end{array}} \right]$ be an operator matrix with $A\in \mathscr{B}\left(\mathscr{H}_1\right)$, $B\in \mathscr{B}\left(\mathscr{H}_2,\mathscr{H}_1\right)$, $C\in \mathscr{B}\left(\mathscr{H}_1,\mathscr{H}_2\right)$ and $D\in \mathscr{B}\left(\mathscr{H}_2\right)$. Then
		\begin{align*}
			\omega \left( {\bf{T}} \right) \le \frac{1}{2}\left( {\omega \left( A \right) + \omega \left( D \right) + \sqrt {\left( {\omega \left( A \right) - \omega \left( D \right)} \right)^2  + \left( {\left\| B \right\| + \left\| C \right\|} \right)^2 } } \right).
		\end{align*}
	\end{lemma}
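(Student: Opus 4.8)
The plan is to work directly from the definition of the numerical radius, evaluating the quadratic form on an arbitrary unit vector and then optimizing over the way its norm is split between the two component spaces. First I would take a unit vector $x = \left[x_1, x_2\right]^T \in \mathscr{H}_1 \oplus \mathscr{H}_2$, so that $\left\|x_1\right\|^2 + \left\|x_2\right\|^2 = 1$, and expand
\begin{align*}
	\left\langle \mathbf{T}x, x \right\rangle = \left\langle Ax_1, x_1 \right\rangle + \left\langle Bx_2, x_1 \right\rangle + \left\langle Cx_1, x_2 \right\rangle + \left\langle Dx_2, x_2 \right\rangle .
\end{align*}
Applying the triangle inequality together with the defining bounds $\left|\left\langle Ax_1, x_1 \right\rangle\right| \le \omega(A)\left\|x_1\right\|^2$ and $\left|\left\langle Dx_2, x_2 \right\rangle\right| \le \omega(D)\left\|x_2\right\|^2$, and the Cauchy--Schwarz estimates $\left|\left\langle Bx_2, x_1 \right\rangle\right| \le \left\|B\right\|\left\|x_1\right\|\left\|x_2\right\|$, $\left|\left\langle Cx_1, x_2 \right\rangle\right| \le \left\|C\right\|\left\|x_1\right\|\left\|x_2\right\|$, yields
\begin{align*}
	\left| \left\langle \mathbf{T}x, x \right\rangle \right| \le \omega(A)\left\|x_1\right\|^2 + \omega(D)\left\|x_2\right\|^2 + \left(\left\|B\right\| + \left\|C\right\|\right)\left\|x_1\right\|\left\|x_2\right\| .
\end{align*}

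The next step is to parametrize the norms by setting $\left\|x_1\right\| = \cos\theta$ and $\left\|x_2\right\| = \sin\theta$ for $\theta \in \left[0, \pi/2\right]$, which is legitimate since only the constraint $\left\|x_1\right\|^2 + \left\|x_2\right\|^2 = 1$ enters the bound. Using the identities $\cos^2\theta = \tfrac{1+\cos 2\theta}{2}$, $\sin^2\theta = \tfrac{1-\cos 2\theta}{2}$ and $\sin\theta\cos\theta = \tfrac{\sin 2\theta}{2}$, the right-hand side rewrites as
\begin{align*}
	\frac{\omega(A) + \omega(D)}{2} + \frac{\omega(A) - \omega(D)}{2}\cos 2\theta + \frac{\left\|B\right\| + \left\|C\right\|}{2}\sin 2\theta .
\end{align*}

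The heart of the argument is then the elementary maximization of $\alpha\cos 2\theta + \beta\sin 2\theta$ over $\theta$, whose maximum equals $\sqrt{\alpha^2 + \beta^2}$; with $\alpha = \tfrac{\omega(A) - \omega(D)}{2}$ and $\beta = \tfrac{\left\|B\right\| + \left\|C\right\|}{2} \ge 0$ this produces exactly the claimed square-root term, and the nonnegativity of $\beta$ guarantees the maximizing value of $2\theta$ lies in $\left[0,\pi\right]$, so it is attained within the chosen range of $\theta$. Since the resulting bound no longer depends on the unit vector $x$, taking the supremum over all such $x$ gives $\omega(\mathbf{T})$ on the left and completes the proof. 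I do not anticipate any serious obstacle: the argument is entirely elementary once the quadratic form is expanded, and the only points needing minor care are the degenerate cases $x_1 = 0$ or $x_2 = 0$, which correspond to the endpoints $\theta = \pi/2$ and $\theta = 0$ of the parametrization and are therefore covered automatically.
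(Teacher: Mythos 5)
Your proof is correct, and there is in fact no proof in the paper to compare it against line by line: Lemma \ref{lem2} is stated there as a preliminary without proof, quoted from the literature on $2\times 2$ operator matrices (the line of results in \cite{HKS} and \cite{DJH}). Your argument --- expand $\langle \mathbf{T}x,x\rangle$ for a unit vector $x=[x_1,x_2]^T$, bound the diagonal terms by $\omega(A)\|x_1\|^2$ and $\omega(D)\|x_2\|^2$, the off-diagonal terms by Cauchy--Schwarz, then maximize over the norm split via $(\|x_1\|,\|x_2\|)=(\cos\theta,\sin\theta)$ and the estimate $\alpha\cos\phi+\beta\sin\phi\le\sqrt{\alpha^2+\beta^2}$ --- is complete and elementary; note that your attainment remark about $2\theta\in[0,\pi]$ is not even needed, since only the upper bound on $\alpha\cos\phi+\beta\sin\phi$ enters, and for the same reason the sign of $\omega(A)-\omega(D)$ is immaterial. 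The derivation implicit in the paper's own toolkit is structurally different: one first applies the Abu-Omar--Kittaneh block bound (the $t^{(3)}_{kj}$ bound recalled in Section \ref{intro}) to get $\omega(\mathbf{T})\le\omega\left(\left[\begin{smallmatrix}\omega(A) & \|B\|\\ \|C\| & \omega(D)\end{smallmatrix}\right]\right)$, and then uses Lemma \ref{lem3}, which identifies the numerical radius of this entrywise nonnegative matrix with the spectral radius of its symmetrization $\left[\begin{smallmatrix}\omega(A) & \tfrac{\|B\|+\|C\|}{2}\\ \tfrac{\|B\|+\|C\|}{2} & \omega(D)\end{smallmatrix}\right]$, whose largest eigenvalue is exactly the claimed expression. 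That two-step pattern is what the paper actually reuses in Corollary \ref{cor1} and Theorems \ref{thm3}--\ref{thm4}, and it scales to $n\times n$ block matrices; your direct computation buys full self-containedness (no block-matrix lemmas at all) at the price of being tailored to the $2\times 2$ case.
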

	
	\begin{lemma}
		\label{lem3}If $S:=\left[s_{kj}\right]\in \mathscr{M}_n\left(\mathbb{C}\right)$, then
		\begin{align*}
			\omega\left(S\right) \le \omega\left(\left[\left|s_{kj}\right|\right]\right)=\frac{1}{2} r\left(\left[\left|s_{kj}\right|+\left[\left|s_{kj}\right|\right]\right]\right)
		\end{align*} 
	\end{lemma}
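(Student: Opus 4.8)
The plan is to establish the two assertions separately: first the inequality $\omega(S)\le\omega\left([\,|s_{kj}|\,]\right)$, and then the identity expressing $\omega\left([\,|s_{kj}|\,]\right)$ through a spectral radius. The first part is a direct estimate on the quadratic form. For a unit vector $x=(x_1,\dots,x_n)^T$, set $|x|=(|x_1|,\dots,|x_n|)^T$, which is again a unit vector since $\||x|\|=\|x\|=1$. The triangle inequality then gives
\[
\left|\langle Sx,x\rangle\right|=\left|\sum_{k,j} s_{kj}\,\bar{x}_k\,x_j\right|\le \sum_{k,j}|s_{kj}|\,|x_k|\,|x_j|=\big\langle [\,|s_{kj}|\,]\,|x|,\,|x|\big\rangle\le \omega\left([\,|s_{kj}|\,]\right),
\]
and taking the supremum over all unit vectors $x$ yields $\omega(S)\le\omega\left([\,|s_{kj}|\,]\right)$.

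For the identity, set $A:=[\,|s_{kj}|\,]$, a real matrix with nonnegative entries, and use the standard representation $\omega(A)=\max_{\theta\in[0,2\pi)}\lambda_{\max}\!\left(\mathrm{Re}(e^{i\theta}A)\right)$, where $\mathrm{Re}(B)=\tfrac12(B+B^{*})$ and $\lambda_{\max}$ denotes the largest eigenvalue of a Hermitian matrix. I would claim that this maximum is attained at $\theta=0$. The $(k,j)$ entry of the Hermitian matrix $B_\theta:=\mathrm{Re}(e^{i\theta}A)$ equals $\tfrac12\big(e^{i\theta}|s_{kj}|+e^{-i\theta}|s_{jk}|\big)$, whose modulus is at most $\tfrac12\big(|s_{kj}|+|s_{jk}|\big)$, which is exactly the $(k,j)$ entry of the nonnegative symmetric matrix $B_0:=\tfrac12(A+A^{*})$. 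Hence $B_\theta$ is dominated entrywise in modulus by $B_0$.

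Next I would invoke the Perron--Frobenius monotonicity principle: if a matrix $C$ satisfies $|C_{kj}|\le D_{kj}$ for all $k,j$ with $D$ nonnegative, then $r(C)\le r(D)$. Applying this with $C=B_\theta$ and $D=B_0$, and using that $B_\theta$ is Hermitian (so $\lambda_{\max}(B_\theta)\le r(B_\theta)$), gives $\lambda_{\max}(B_\theta)\le r(B_\theta)\le r(B_0)$. Since $B_0$ is symmetric with nonnegative entries, its spectral radius coincides with its Perron root and hence with its largest eigenvalue, so $r(B_0)=\lambda_{\max}(B_0)$. Combining these inequalities yields $\lambda_{\max}(B_\theta)\le\lambda_{\max}(B_0)$ for every $\theta$, so the maximum over $\theta$ is indeed attained at $\theta=0$. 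Consequently
\[
\omega(A)=\lambda_{\max}(B_0)=r(B_0)=\tfrac12\,r\!\left(A+A^{*}\right)=\tfrac12\,r\!\left([\,|s_{kj}|\,]+[\,|s_{kj}|\,]^{*}\right),
\]
which is the stated identity.

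The main obstacle is this equality step. The inequality $\omega(S)\le\omega\left([\,|s_{kj}|\,]\right)$ is a routine triangle-inequality estimate, but showing that the maximum of $\lambda_{\max}\!\left(\mathrm{Re}(e^{i\theta}A)\right)$ occurs at $\theta=0$ genuinely relies on the Perron--Frobenius comparison of spectral radii under entrywise domination, together with the fact that the spectral radius of a nonnegative symmetric matrix equals its top eigenvalue. Care is needed to track that $B_0$ is nonnegative and symmetric so that both of these classical facts apply cleanly.
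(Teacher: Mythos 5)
Your proof is correct. Note, however, that the paper offers no proof of Lemma \ref{lem3} to compare against: it is recalled as a known result (due to Hou and Du \cite{HD}) and simply invoked later, so your argument is a genuine, self-contained addition. You also correctly deciphered the misprinted right-hand side: the intended matrix is $\left[\left|s_{kj}\right|+\left|s_{jk}\right|\right]=A+A^{T}$ with $A=\left[\left|s_{kj}\right|\right]$, which is exactly how the paper applies the lemma (e.g.\ in the proof of Corollary \ref{cor1}, where the two off-diagonal norms are averaged). Both of your steps are sound --- the triangle-inequality estimate for $\omega(S)\le\omega(A)$, and the identity $\omega(A)=\tfrac{1}{2}r\left(A+A^{T}\right)$ via $\omega(A)=\max_{\theta}\lambda_{\max}\left(\mathrm{Re}\left(e^{i\theta}A\right)\right)$ together with the Perron--Frobenius comparison theorem. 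Still, the second step is heavier than it needs to be: the very computation you used in the first part settles it. For any unit vector $x$, nonnegativity of the entries of $A$ gives
\begin{align*}
\left|\left\langle Ax,x\right\rangle\right|\le\left\langle A\left|x\right|,\left|x\right|\right\rangle=\tfrac{1}{2}\left\langle\left(A+A^{T}\right)\left|x\right|,\left|x\right|\right\rangle\le\tfrac{1}{2}\lambda_{\max}\left(A+A^{T}\right),
\end{align*}
so $\omega(A)\le\tfrac{1}{2}\lambda_{\max}\left(A+A^{T}\right)$, while conversely $\omega(A)\ge\sup_{\left\|x\right\|=1}\left\langle\mathrm{Re}(A)x,x\right\rangle=\tfrac{1}{2}\lambda_{\max}\left(A+A^{T}\right)$; finally $\lambda_{\max}\left(A+A^{T}\right)=r\left(A+A^{T}\right)$ because $A+A^{T}$ is symmetric and the same triangle-inequality trick, applied to a unit eigenvector of any eigenvalue of maximal modulus, shows no eigenvalue exceeds $\lambda_{\max}$ in absolute value. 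This shorter route uses nothing beyond the Rayleigh quotient for symmetric matrices, whereas yours imports the entrywise-domination theorem for spectral radii; both are valid proofs of the lemma.
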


	Let  $A\in \mathscr{B}\left( \mathscr{H}\right)  $, then
	\begin{align*}
		\left| {\left\langle {Ax,y} \right\rangle} \right|  ^2  \le \left\langle {\left| A \right|^{2\alpha } x,x} \right\rangle \left\langle {\left| {A^* } \right|^{2\left( {1 - \alpha } \right)} y,y} \right\rangle, \qquad 0\le \alpha \le 1
	\end{align*}
	
	for any   vectors $x,y\in \mathscr{H}$, where  $\left|A\right|=\left(A^*A\right)^{1/2}$. This inequality is well-known as the mixed Schwarz inequality which was introduced in \cite{TK} and generalized later in \cite{FK2}.\\
	
	The following result presents the Cartesian decomposition of the mixed Schwarz inequality \cite{mwo2}.
	\begin{lemma}
		\label{lem4}Let  $A\in \mathscr{B}\left( \mathscr{H}\right)$
		with the Cartesian decomposition    $A=P+iQ$.   If $f$ and $g$ are nonnegative continuous
		functions on $\left[0,\infty\right)$ satisfying $f(t)g(t) =t$
		$(t\ge0)$, then
		\begin{align*}
			\left| {\left\langle {A x,y} \right\rangle } \right| \le   \left\| {f\left( \left|P\right| \right)x}
			\right\|\left\| {g\left( \left|P \right| \right)y} \right\| +
			\left\| {f\left( \left|Q\right| \right)x} \right\|\left\| {g\left(
				\left|Q\right| \right)y} \right\|   
		\end{align*}
		for all $x,y\in \mathscr{H}$.
	\end{lemma}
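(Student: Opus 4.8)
The plan is to reduce the estimate for the (generally non-self-adjoint) operator $A$ to two applications of the generalized mixed Schwarz inequality on its self-adjoint Cartesian components. Writing $A = P + iQ$ with $P = \frac{1}{2}\left(A + A^*\right)$ and $Q = \frac{1}{2i}\left(A - A^*\right)$ both self-adjoint, the first step is the elementary triangle inequality
\begin{align*}
	\left| {\left\langle {A x,y} \right\rangle } \right| = \left| {\left\langle {P x,y} \right\rangle + i\left\langle {Q x,y} \right\rangle } \right| \le \left| {\left\langle {P x,y} \right\rangle } \right| + \left| {\left\langle {Q x,y} \right\rangle } \right|,
\end{align*}
which holds for all $x,y\in\mathscr{H}$ and splits the problem into two independent self-adjoint pieces.

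The second and principal step is to bound each summand by the generalized form of the mixed Schwarz inequality from \cite{FK2}: for any $T\in\mathscr{B}\left(\mathscr{H}\right)$ and nonnegative continuous $f,g$ on $\left[0,\infty\right)$ with $f(t)g(t)=t$, one has $\left|\left\langle Tx,y\right\rangle\right|\le\left\|f\left(\left|T\right|\right)x\right\|\left\|g\left(\left|T^*\right|\right)y\right\|$. Applying this with $T=P$ and then $T=Q$ yields $\left|\left\langle Px,y\right\rangle\right|\le\left\|f\left(\left|P\right|\right)x\right\|\left\|g\left(\left|P^*\right|\right)y\right\|$ and the analogous bound for $Q$. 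The decisive simplification, and the point at which self-adjointness enters, is that $P=P^*$ and $Q=Q^*$ force $\left|P\right|=\left|P^*\right|=\left(P^2\right)^{1/2}$ and $\left|Q\right|=\left|Q^*\right|=\left(Q^2\right)^{1/2}$; hence, by the continuous functional calculus, $g\left(\left|P^*\right|\right)=g\left(\left|P\right|\right)$ and $g\left(\left|Q^*\right|\right)=g\left(\left|Q\right|\right)$. The two bounds therefore collapse to $\left\|f\left(\left|P\right|\right)x\right\|\left\|g\left(\left|P\right|\right)y\right\|$ and $\left\|f\left(\left|Q\right|\right)x\right\|\left\|g\left(\left|Q\right|\right)y\right\|$, and substituting these into the triangle inequality delivers the claimed estimate.

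The only genuine subtlety is internal to the tool being invoked: the generalized mixed Schwarz inequality of \cite{FK2} rests on the polar decomposition $T=U\left|T\right|$ together with the functional-calculus identity $Uh\left(\left|T\right|\right)U^*=h\left(\left|T^*\right|\right)$, which must be handled with care when $U$ is merely a partial isometry, i.e.\ when $T$ has nontrivial kernel (here the condition $f(0)g(0)=0$, forced by $f(t)g(t)=t$, is exactly what makes the kernel contributions vanish). Since I am entitled to quote that inequality as an established result, the self-adjoint reduction carried out above is routine and the argument presents no further obstacle.
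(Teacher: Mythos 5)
Your proof is correct. Note that the paper offers no proof of Lemma \ref{lem4} at all: it is quoted from \cite{mwo2}, so there is no internal argument to compare against. Your route — splitting $\left\langle Ax,y\right\rangle$ into its Cartesian components via the triangle inequality, applying the generalized mixed Schwarz inequality $\left|\left\langle Tx,y\right\rangle\right|\le\left\|f\left(\left|T\right|\right)x\right\|\left\|g\left(\left|T^*\right|\right)y\right\|$ to each of $P$ and $Q$, and using self-adjointness to identify $\left|P^*\right|=\left|P\right|$ and $\left|Q^*\right|=\left|Q\right|$ — is the natural and standard argument for this statement (and is essentially how the cited source proceeds); your closing remark also correctly locates the only delicate point (the partial-isometry case of the polar decomposition) inside the quoted tool rather than in your own reduction.
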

	
	\section{Numerical radius inequalities of $m\times m$ matrix operator }\label{sec4}
	
	\begin{theorem}
		\label{thm1}    
		Let   ${\bf{A}}=\left[A_{kj}\right]\in \oplus_m \mathscr{M}_{n} \left(\mathbb{C}\right)$ be an $m\times m$ operator  such that $P_{kj}+iQ_{kj}$ is the corresponding  Cartesian decomposition of $A_{kj}$. If $f$ and $g$ are nonnegative continuous functions on $\left[0,\infty\right)$ satisfying $f(t)g(t) =t$ $(t\ge0)$, then
\begin{align}
	w\left({\bf{A}}\right)\le   w^{1/2}\left( \left[c_{kj}\right] \right),\label{eq2.1}
\end{align}
where
\begin{align*}
	c_{kj}= m\cdot \left\{ \begin{array}{l}
		w\left( {P_{kk}^2  + Q_{kk}^2 } \right),\qquad  \qquad\qquad\qquad\qquad\qquad\qquad\qquad\qquad j = k  \\
		\\
		\frac{1}{4}      \left\| {f^2 \left( {\left| {P_{kj} } \right|} \right) + g^2 \left( {\left| {P_{kj} } \right|} \right) + f^2 \left( {\left| {Q_{kj} } \right|} \right) + g^2 \left( {\left| {Q_{kj} } \right|} \right)} \right\|^2, \qquad j \ne k.   \\
	\end{array} \right. 
\end{align*}
\end{theorem}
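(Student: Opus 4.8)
The plan is to estimate $w(\mathbf A)=\sup_{\|\mathbf x\|=1}\left|\langle \mathbf A\mathbf x,\mathbf x\rangle\right|$ directly on a unit vector $\mathbf x=(x_1,\dots,x_m)^{T}\in\bigoplus_m\mathbb C^{n}$, writing $u_k=\|x_k\|$ so that $\sum_k u_k^2=1$. Since $\langle \mathbf A\mathbf x,\mathbf x\rangle=\sum_{k,j}\langle A_{kj}x_j,x_k\rangle$, the triangle inequality gives $\left|\langle \mathbf A\mathbf x,\mathbf x\rangle\right|\le\sum_{k,j}\left|\langle A_{kj}x_j,x_k\rangle\right|=:\Sigma$. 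The idea is to produce, for every pair $(k,j)$, a product-type estimate $\left|\langle A_{kj}x_j,x_k\rangle\right|\le\sqrt{c_{kj}/m}\,u_ju_k$, and then to assemble these by a single Cauchy--Schwarz step that recovers the factor $m$. Indeed, weighting by $\sqrt{u_ju_k}$ one gets $\Sigma^2\le\big(\sum_{k,j}u_ju_k\big)\big(\sum_{k,j}\left|\langle A_{kj}x_j,x_k\rangle\right|^2/(u_ju_k)\big)$ (the indices with $u_ju_k=0$ are dropped, the corresponding inner products vanishing). Now $\sum_{k,j}u_ju_k=\big(\sum_k u_k\big)^2\le m\sum_k u_k^2=m$, while the per-entry bounds give $\left|\langle A_{kj}x_j,x_k\rangle\right|^2/(u_ju_k)\le (c_{kj}/m)\,u_ju_k$. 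Hence $\Sigma^2\le\sum_{k,j}c_{kj}u_ku_j=\langle [c_{kj}]u,u\rangle\le w([c_{kj}])$, since $u=(u_1,\dots,u_m)^T$ is a unit vector and $\langle [c_{kj}]u,u\rangle$ is a nonnegative element of $W([c_{kj}])$. As $\mathbf x$ is an arbitrary unit vector, this yields $w(\mathbf A)^2\le w([c_{kj}])$, which is \eqref{eq2.1}.

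For a diagonal block the per-entry bound is straightforward. Writing $A_{kk}=P_{kk}+iQ_{kk}$ with $P_{kk},Q_{kk}$ selfadjoint, the quantities $\langle P_{kk}x_k,x_k\rangle$ and $\langle Q_{kk}x_k,x_k\rangle$ are real, so $\left|\langle A_{kk}x_k,x_k\rangle\right|^2=\langle P_{kk}x_k,x_k\rangle^2+\langle Q_{kk}x_k,x_k\rangle^2$. By the Cauchy--Schwarz inequality $\langle P_{kk}x_k,x_k\rangle^2\le\langle P_{kk}^2x_k,x_k\rangle\,u_k^2$, and likewise for $Q_{kk}$, so $\left|\langle A_{kk}x_k,x_k\rangle\right|^2\le\langle (P_{kk}^2+Q_{kk}^2)x_k,x_k\rangle\,u_k^2\le w(P_{kk}^2+Q_{kk}^2)\,u_k^4$. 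This is exactly the desired estimate with $c_{kk}=m\,w(P_{kk}^2+Q_{kk}^2)$.

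The off-diagonal estimate is the crux. For $j\ne k$, Lemma~\ref{lem4} applied to $A_{kj}$ gives $\left|\langle A_{kj}x_j,x_k\rangle\right|\le\|f(|P_{kj}|)x_j\|\,\|g(|P_{kj}|)x_k\|+\|f(|Q_{kj}|)x_j\|\,\|g(|Q_{kj}|)x_k\|$, and the natural continuation is to apply AM--GM to each product and regroup the $x_j$- and $x_k$-terms, yielding $\tfrac12\langle X_f x_j,x_j\rangle+\tfrac12\langle X_g x_k,x_k\rangle$, where $X_f=f^2(|P_{kj}|)+f^2(|Q_{kj}|)$ and $X_g=g^2(|P_{kj}|)+g^2(|Q_{kj}|)$. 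Since $X_f,X_g\ge0$ we have $X_f\le X_f+X_g$ and $X_g\le X_f+X_g$, so each term is controlled by $\tfrac12\|X_f+X_g\|$ times $u_j^2$, resp.\ $u_k^2$; note $\|X_f+X_g\|=\|f^2(|P_{kj}|)+g^2(|P_{kj}|)+f^2(|Q_{kj}|)+g^2(|Q_{kj}|)\|$ equals $2\sqrt{c_{kj}/m}$. The delicate point --- the step I expect to be the main obstacle --- is that this procedure naturally produces the sum-of-squares majorant $\tfrac12\|X_f+X_g\|(u_j^2+u_k^2)$, whereas the assembly above requires the sharper product majorant $\tfrac12\|X_f+X_g\|\,u_ju_k$ involving the two \emph{distinct} vectors $x_j,x_k$. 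Obtaining the product form with precisely the constant $\tfrac12\|X_f+X_g\|$ --- rather than the weaker $\tfrac12(\|X_f\|+\|X_g\|)$ or $\sqrt{\|X_f\|\,\|X_g\|}$ that a plain Cauchy--Schwarz on the two norm factors would give --- is where the genuine difficulty lies, and it is the estimate I would scrutinize most carefully, since the submultiplicativity available for $X_f,X_g$ runs in the opposite direction to what the sharp constant demands.
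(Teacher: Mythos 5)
Your proof, as written, is incomplete, and the gap is exactly the one you flagged: the off-diagonal product bound
$\left|\left\langle A_{kj}x_j,x_k\right\rangle\right|\le\frac{1}{2}\left\|X_f+X_g\right\|u_ju_k$, with $X_f=f^2\left(\left|P_{kj}\right|\right)+f^2\left(\left|Q_{kj}\right|\right)$ and $X_g=g^2\left(\left|P_{kj}\right|\right)+g^2\left(\left|Q_{kj}\right|\right)$, is what your assembly needs but is never derived. You are right that the natural moves do not give it: AM--GM regrouping yields only the sum majorant $\frac{1}{2}\left(\left\|X_f\right\|u_j^2+\left\|X_g\right\|u_k^2\right)$, Cauchy--Schwarz on the two norm factors yields only $\sqrt{\left\|X_f\right\|\left\|X_g\right\|}\,u_ju_k$, and neither quantity is dominated by $\frac{1}{2}\left\|X_f+X_g\right\|$ for general positive operators (take $X_f,X_g$ two orthogonal projections: $\left\|X_f+X_g\right\|=1$ while both other quantities equal $2$). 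Your diagnosis is in fact sharper than you may realize: the paper's own proof handles this step by quoting Lemma \ref{lem4} in a distorted form, with both inner products mixing $x_j$ and $x_k$ (expressions like $\langle f^2(|P_{kj}|)x_j,x_k\rangle^{1/2}\langle g^2(|P_{kj}|)x_j,x_k\rangle^{1/2}$, which is not what Lemma \ref{lem4} says), and only that misquotation permits the regrouping into $\frac{1}{2}\langle(X_f+X_g)x_j,x_k\rangle$; corrected, it hits precisely your obstruction. Moreover, the paper's opening ``Jensen'' step $\frac{1}{m^2}\left|\sum_{k,j}z_{kj}\right|^2\le\frac{1}{m}\sum_{k,j}\left|z_{kj}\right|^2$ loses a factor of $m$, since over $m^2$ terms Jensen only gives $\frac{1}{m^2}\left|\sum_{k,j}z_{kj}\right|^2\le\sum_{k,j}\left|z_{kj}\right|^2$. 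By contrast, your weighted Cauchy--Schwarz assembly recovers the constant $m$ honestly, and your diagonal estimate is correct.

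The good news is that the missing off-diagonal estimate is true; the route to it goes through $\left|P_{kj}\right|+\left|Q_{kj}\right|$ rather than through $X_f$ and $X_g$ separately. For arbitrary $x_j,x_k$,
\begin{align*}
\left|\left\langle A_{kj}x_j,x_k\right\rangle\right| &\le \left|\left\langle P_{kj}x_j,x_k\right\rangle\right|+\left|\left\langle Q_{kj}x_j,x_k\right\rangle\right|\\
&\le \left\langle \left|P_{kj}\right|x_j,x_j\right\rangle^{1/2}\left\langle \left|P_{kj}\right|x_k,x_k\right\rangle^{1/2}+\left\langle \left|Q_{kj}\right|x_j,x_j\right\rangle^{1/2}\left\langle \left|Q_{kj}\right|x_k,x_k\right\rangle^{1/2}\\
&\le \left\langle \left(\left|P_{kj}\right|+\left|Q_{kj}\right|\right)x_j,x_j\right\rangle^{1/2}\left\langle \left(\left|P_{kj}\right|+\left|Q_{kj}\right|\right)x_k,x_k\right\rangle^{1/2}\\
&\le \left\|\,\left|P_{kj}\right|+\left|Q_{kj}\right|\,\right\|u_ju_k,
\end{align*}
where the second inequality is the mixed Schwarz inequality for the self-adjoint operators $P_{kj}$ and $Q_{kj}$ (equivalently, Lemma \ref{lem4} applied to an operator with zero imaginary part and $f=g=\sqrt{t}$), and the third is Cauchy--Schwarz in $\mathbb{R}^2$. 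Now reintroduce $f$ and $g$ only through the scalar inequality $f^2(t)+g^2(t)\ge 2f(t)g(t)=2t$ on $\left[0,\infty\right)$: by the functional calculus, $f^2\left(\left|P_{kj}\right|\right)+g^2\left(\left|P_{kj}\right|\right)\ge 2\left|P_{kj}\right|$ and likewise for $Q_{kj}$, so $X_f+X_g\ge 2\left(\left|P_{kj}\right|+\left|Q_{kj}\right|\right)\ge 0$, whence $\left\|\,\left|P_{kj}\right|+\left|Q_{kj}\right|\,\right\|\le\frac{1}{2}\left\|X_f+X_g\right\|$. Chaining the two displays gives exactly the product majorant $\sqrt{c_{kj}/m}\,u_ju_k$ your assembly requires, and with it your argument closes; completed this way, your proof is a legitimate derivation of the stated bound with the constant $m$, which the paper's own argument does not actually provide.
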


\begin{proof}
Let $x = \left[ {\begin{array}{*{20}c}  {x_1 } & {x_2 } &  \cdots  & {x_m}  \\
\end{array}} \right]^T \in \bigoplus _{\ell = 1}^m \mathscr{M}_{n} \left(\mathbb{C}\right)$ with $\|x\|=1$,   then we have
\begin{align*}
	&\frac{1}{m^2}\left| {\left\langle {{\bf{A}}x,x} \right\rangle } \right|^2  
	\\
	&= 	\frac{1}{m^2}\left|
	{\sum\limits_{k,j = 1}^m {\left\langle { A_{kj} x_j ,x_k }
			\right\rangle } } \right|^2
	\\
	&\le  \frac{1}{m} \sum\limits_{k,j = 1}^m {\left| {\left\langle { A_{kj}  x_j ,x_k } \right\rangle } \right|^2}  \nonumber\qquad \qquad{\rm{ (by \,\,Jensen's\,\, inequality)}}
	\\
	&=  \frac{1}{m} \sum\limits_{k  = 1}^m {\left| {\left\langle {A_{kk}  x_k , x_k } \right\rangle } \right|^2}  + \frac{1}{m}   \sum\limits_{\scriptstyle j = 1 \hfill \atop
		\scriptstyle j \ne k \hfill}^m {\left| {\left\langle {A_{kj} x_j ,x_k } \right\rangle } \right|^2} \nonumber\\
	&\le  \frac{1}{m}   
	\sum\limits_{k = 1}^m {\left( {\left\langle {P_{kk} x_k ,x_k } \right\rangle ^2  + \left\langle {Q_{kk} x_k ,x_k } \right\rangle ^2 } \right)  } 
	\\
	&\qquad+ \frac{1}{m} \sum\limits_{\scriptstyle j = 1 \hfill \atop
		\scriptstyle j \ne k \hfill}^m { \left[  \left\langle {f^2 \left( {\left| {P_{kj} } \right|} \right)x_j ,x_k } \right\rangle ^{\frac{1}{2}} \left\langle {g^2 \left( {\left| {P_{kj} } \right| } \right)x_j ,x_k }\right\rangle ^{\frac{1}{2}}\right. }
	\\
	&\qquad\qquad\qquad+        \left. {\left\langle {f^2 \left( {\left| {Q_{kj} } \right|} \right)x_j ,x_k} \right\rangle ^{\frac{1}{2}} \left\langle {g^2 \left( {\left| {Q_{kj} } \right|} \right)x_j ,x_k } \right\rangle ^{\frac{1}{2}} } \right]^2
	\\
	&\le \frac{1}{m}  
	\sum\limits_{k = 1}^m {\left( {\left\| {P_{kk} x_k } \right\|^2  + \left\| {Q_{kk} x_k } \right\|^2 } \right)  } 
	\\
	&\qquad +\frac{1}{4m} \sum\limits_{\scriptstyle j = 1 \hfill \atop
		\scriptstyle j \ne k \hfill}^m    \left\{     \left\langle { \left[f^2 \left( {\left| {P_{kj} } \right|} \right)+g^2 \left( {\left| {P_{kj} } \right|} \right)\right]  x_j ,x_k } \right\rangle  +   \left\langle {\left[f^2 \left( {\left| {Q_{kj} } \right|} \right)+g^2 \left( {\left| {Q_{kj} } \right|} \right) \right]x_j ,x_k } \right\rangle     \right\}^2
\end{align*}
\begin{align*}
	&=  \frac{1}{m} \sum\limits_{k = 1}^m {\left( {\left\langle {P_{kk}^2 x_k ,x_k } \right\rangle  + \left\langle {Q_{kk}^2 x_k ,x_k } \right\rangle } \right)  } 
	\\
	&\qquad+\frac{1}{4m}  \sum\limits_{\scriptstyle j = 1 \hfill \atop
		\scriptstyle j \ne k \hfill}^m   \left\| {f^2 \left( {\left| {P_{kj} } \right|} \right) + g^2 \left( {\left| {P_{kj} } \right|} \right) + f^2 \left( {\left| {Q_{kj} } \right|} \right) + g^2 \left( {\left| {Q_{kj} } \right|} \right)} \right\|^2\|x_j\| ^2\|x_k\|^2 
	\\
	&\le  \frac{1}{m} 	\sum\limits_{k = 1}^m {\left\langle {\left( {P_{kk}^2  + Q_{kk}^2 } \right)x_k ,x_k } \right\rangle   }  
	\\
	&\qquad+\frac{1}{4m}  \sum\limits_{\scriptstyle j = 1 \hfill \atop
		\scriptstyle j \ne k \hfill}^m   \left\| {f^2 \left( {\left| {P_{kj} } \right|} \right) + g^2 \left( {\left| {P_{kj} } \right|} \right) + f^2 \left( {\left| {Q_{kj} } \right|} \right) + g^2 \left( {\left| {Q_{kj} } \right|} \right)} \right\|^2\|x_j\|  \|x_k\|
	\\
	&\qquad\qquad\qquad\qquad \qquad\qquad\qquad\qquad \qquad\qquad \qquad\qquad({\text{since  \,\,$\|x_j\|^2\le \|x_j\|,\,\, \forall j$}}) 
	\\
	&=  \frac{1}{m} 
	\sum\limits_{k = 1}^m {\left\| {P_{kk}^2  + Q_{kk}^2 } \right\|  \left\| {x_k } \right\|^2} 
	\\
	&\qquad+\frac{1}{4m}  \sum\limits_{\scriptstyle j = 1 \hfill \atop
		\scriptstyle j \ne k \hfill}^m   \left\| {f^2 \left( {\left| {P_{kj} } \right|} \right) + g^2 \left( {\left| {P_{kj} } \right|} \right) + f^2 \left( {\left| {Q_{kj} } \right|} \right) + g^2 \left( {\left| {Q_{kj} } \right|} \right)} \right\|\|x_j\| \|x_k\| 
	\\
	&= \left\langle {\left[ {c_{kj} } \right]y,y} \right\rangle
\end{align*}
where $y=\left( {\begin{array}{*{20}c}{\left\| {x_1 } \right\|} &
		{\left\| {x_2 } \right\|} &  \cdots  & {\left\| {x_m } \right\|}
		\\  \end{array}} \right)^T$. Taking the supremum over $x  \in
\bigoplus_m \mathscr{M}_{n}$, we obtain the desired result.
\end{proof}

	Particularly, we are interested in the following $2\times 2$ cases:

	\begin{corollary}
		\label{cor1}    If $\bf{A}=\left[ {\begin{array}{*{20}c}
				{ A_{11}  } & { A_{12}  }  \\
				{ A_{21}   } & { A_{22}  }  \\
		\end{array}} \right]$ in $\mathscr{M}_n\bigoplus \mathscr{M}_n$, then
		\begin{align}
			\label{eq2.2}\omega\left( \left[ {\begin{array}{*{20}c}
					{ A_{11}  } & { A_{12} }  \\
					{ A_{21}  } & { A_{22}  }  \\
			\end{array}} \right]\right)
			\le  \sqrt{   \omega \left( {P_{11}^2  + Q_{11}^2 } \right) + \omega \left( {P_{22}^2  + Q_{22}^2 } \right)   +  \sqrt {\left( \omega \left( {P_{11}^2  + Q_{11}^2 } \right) - \omega \left( {P_{22}^2  + Q_{22}^2 } \right) \right)^2+N^2  }  }
		\end{align}
		where  $N=  \left\|    \left| P_{12} \right|   +   \left| Q_{12} \right|      \right\|  + \left\|    \left| P_{21} \right|   +   \left| Q_{21} \right|      \right\|$.	
	\end{corollary}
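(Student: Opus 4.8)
The plan is to obtain Corollary \ref{cor1} as the $m=2$ specialization of Theorem \ref{thm1} once a concrete admissible pair $f,g$ is fixed. The natural choice is $f(t)=g(t)=t^{1/2}$, which satisfies $f(t)g(t)=t$ on $[0,\infty)$ and collapses the functional calculus: because $P_{kj}$ and $Q_{kj}$ are selfadjoint we have $f^2\left(\left|P_{kj}\right|\right)=g^2\left(\left|P_{kj}\right|\right)=\left|P_{kj}\right|$, and likewise for $Q_{kj}$. Hence the operator inside the off-diagonal entry $c_{kj}$ reduces to $2\left(\left|P_{kj}\right|+\left|Q_{kj}\right|\right)$.

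First I would evaluate the entries $c_{kj}$ of Theorem \ref{thm1} at $m=2$. The diagonal entries are $c_{11}=2\,\omega\left(P_{11}^2+Q_{11}^2\right)$ and $c_{22}=2\,\omega\left(P_{22}^2+Q_{22}^2\right)$, while the substitution above turns the off-diagonal entries into $c_{12}=\tfrac12\left\|2\left(\left|P_{12}\right|+\left|Q_{12}\right|\right)\right\|^2=2\left\|\,\left|P_{12}\right|+\left|Q_{12}\right|\,\right\|^2$ and, symmetrically, $c_{21}=2\left\|\,\left|P_{21}\right|+\left|Q_{21}\right|\,\right\|^2$. Thus $\left[c_{kj}\right]$ is a fixed $2\times 2$ matrix of \emph{nonnegative real numbers}.

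The second half of the argument estimates $\omega\left(\left[c_{kj}\right]\right)$ and then applies the outer square root demanded by \eqref{eq2.1}. Here I would invoke Lemma \ref{lem2} with $\mathscr{H}_1=\mathscr{H}_2=\mathbb{C}$, so that the four blocks are the scalars $c_{11},c_{12},c_{21},c_{22}$; for a scalar both the numerical radius and the operator norm reduce to the modulus, and since each $c_{kj}\ge 0$ the moduli may be dropped. Lemma \ref{lem2} then gives
\begin{align*}
\omega\left(\left[c_{kj}\right]\right)\le \frac12\left(c_{11}+c_{22}+\sqrt{\left(c_{11}-c_{22}\right)^2+\left(c_{12}+c_{21}\right)^2}\right).
\end{align*}
Substituting the entries, factoring the common $2$ out of the radical (each of $c_{11}-c_{22}$ and $c_{12}+c_{21}$ carries a factor $2$), and taking $x\mapsto x^{1/2}$ produces the bound displayed in \eqref{eq2.2}.

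The functional-calculus reduction and the scalar form of Lemma \ref{lem2} are routine; the one delicate point, which I expect to be the main obstacle, is the constant and exponent bookkeeping in the last step, where the factor $m=2$ must be threaded through the $\tfrac14$ inside $c_{kj}$ and the $\tfrac12$ in front of Lemma \ref{lem2}. In particular my computation yields $c_{12}$ and $c_{21}$ already carrying \emph{squared} norms, so that $\tfrac12\left(c_{12}+c_{21}\right)=\left\|\,\left|P_{12}\right|+\left|Q_{12}\right|\,\right\|^2+\left\|\,\left|P_{21}\right|+\left|Q_{21}\right|\,\right\|^2$; matching this against the stated quantity $N$ is the step to verify with care, and it suggests that each norm entering the definition of $N$ should in fact appear squared.
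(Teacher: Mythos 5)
Your route is the same as the paper's: specialize Theorem \ref{thm1} to $m=2$ with $f(t)=g(t)=t^{1/2}$, note that $\left[c_{kj}\right]$ is then an entrywise-nonnegative $2\times 2$ real matrix, and bound its numerical radius by $\frac{1}{2}\bigl(c_{11}+c_{22}+\sqrt{(c_{11}-c_{22})^2+(c_{12}+c_{21})^2}\,\bigr)$ before taking the outer square root from \eqref{eq2.1}. The only cosmetic difference is how that $2\times 2$ formula is reached: the paper symmetrizes the off-diagonal entries and computes a spectral radius via Lemma \ref{lem3}, while you invoke Lemma \ref{lem2} with scalar blocks; for nonnegative entries both routes yield the identical expression.

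The substantive point is the one you flag in your last paragraph, and you are right: carried out faithfully from Theorem \ref{thm1} as stated, the computation gives $c_{12}=2\cdot\frac14\left\|2\left(\left|P_{12}\right|+\left|Q_{12}\right|\right)\right\|^2=2\left\|\,\left|P_{12}\right|+\left|Q_{12}\right|\,\right\|^2$, so the quantity entering the inner radical is $\left\|\,\left|P_{12}\right|+\left|Q_{12}\right|\,\right\|^2+\left\|\,\left|P_{21}\right|+\left|Q_{21}\right|\,\right\|^2$, not $N$. The paper's own proof of Corollary \ref{cor1} silently drops this exponent: its first display inserts the \emph{unsquared} norms as the off-diagonal entries of the $2\times 2$ matrix, which is not what the stated $c_{kj}$ of Theorem \ref{thm1} provides. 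The discrepancy traces back to the proof of Theorem \ref{thm1} itself, where the line justified by $\|x_j\|^2\le\|x_j\|$ still carries $\left\|\cdots\right\|^2$, yet the very next line, asserted as an equality, replaces it by $\left\|\cdots\right\|$. So either Theorem \ref{thm1} should be stated without the square on the norm, in which case \eqref{eq2.2} follows exactly as both you and the paper argue, or the theorem stands as printed and then $N$ in Corollary \ref{cor1} must be replaced by the sum of squared norms, exactly as you conclude. Your deduction is the faithful one from the theorem as written; the inconsistency you exposed lies in the paper, not in your argument. Note also that the two versions are genuinely different bounds, incomparable in general (squaring increases norms larger than $1$ and decreases norms smaller than $1$), so the distinction matters numerically, e.g.\ for the value attributed to Corollary \ref{cor1} in Table \ref{tab1}.
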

	\begin{proof}
		From Theorem \ref{thm1}, we have
		\begin{align*}
			&\omega\left( \left[ {\begin{array}{*{20}c}
					{ A_{11} } & {  A_{12}  }  \\
					{ A_{21}   } & { A_{22}  }  \\
			\end{array}} \right]\right) \\&\le \sqrt{2} \cdot 
			\omega^{\frac{1}{2}} \left( \left[ {\begin{array}{*{20}c}
					\begin{array}{l}
						\omega \left( {P_{11}^2  + Q_{11}^2 } \right)
						\\
						\\
					\end{array} & \begin{array}{l}
						\left\| {\left| {P_{12} } \right| + \left| {Q_{12} } \right|} \right\|\\
						\\
					\end{array}  \\
					\left\| {\left| {P_{21} } \right| + \left| {Q_{21} } \right|} \right\|  &       \omega \left( {P_{22}^2  + Q_{22}^2 } \right)  \\
			\end{array}}\right] \right)
			\\
			&=    \sqrt{2} \cdot  r^{\frac{1}{2}}     \left( \left[ {\begin{array}{*{20}c}
					\begin{array}{l}
						\omega \left( {P_{11}^2  + Q_{11}^2 } \right)        \\
						\\
					\end{array} & \begin{array}{l}
						\frac{\left\| {\left| {P_{12} } \right| + \left| {Q_{12} } \right|} \right\|+\left\| {\left| {P_{21} } \right| + \left| {Q_{21} } \right|} \right\|}{2} \\
						\\
					\end{array}  \\
					\frac{\left\| {\left| {P_{12} } \right| + \left| {Q_{12} } \right|} \right\|+\left\| {\left| {P_{21} } \right| + \left| {Q_{21} } \right|} \right\|}{2} &       \omega \left( {P_{22}^2  + Q_{22}^2 } \right)  \\
			\end{array}}\right] \right)
			\\
			&=  \sqrt{   \omega \left( {P_{11}^2  + Q_{11}^2 } \right) + \omega \left( {P_{22}^2  + Q_{22}^2 } \right)   +  \sqrt {\left( \omega \left( {P_{11}^2  + Q_{11}^2 } \right) - \omega \left( {P_{22}^2  + Q_{22}^2 } \right) \right)^2+N^2  }  }
		\end{align*}
		which proves the result.
	\end{proof}
	
	\begin{corollary}
		\label{cor2}    If $\bf{A}=\left[ {\begin{array}{*{20}c}
				{ A_{11}  } & { 0  }  \\
				{ 0   } & { A_{22}  }  \\
		\end{array}} \right]$ in $\mathscr{M}_n\bigoplus \mathscr{M}_n$, then
		\begin{align*}
			\omega\left( \left[ {\begin{array}{*{20}c}
					{ A_{11}  } & {0}  \\
					{ 0  } & { A_{22}  }  \\
			\end{array}} \right]\right)
			\le    \max\left(  \omega \left( {P_{11}^2  + Q_{11}^2 } \right),\omega \left( {P_{22}^2  + Q_{22}^2 } \right) \right)
		\end{align*}
		where $P_{kk} + i Q_{kk}$ is the Cartesian decomposition of $A_{kk}$.
	\end{corollary}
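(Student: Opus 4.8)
The plan is to obtain this as the degenerate case of Corollary \ref{cor1} in which the off-diagonal blocks vanish, and to cross-check it against a direct computation. Setting $A_{12}=A_{21}=0$ forces their Cartesian components to vanish, $P_{12}=Q_{12}=P_{21}=Q_{21}=0$, so the quantity $N=\|\,|P_{12}|+|Q_{12}|\,\|+\|\,|P_{21}|+|Q_{21}|\,\|$ appearing in \eqref{eq2.2} is $0$. Writing $a:=\omega(P_{11}^2+Q_{11}^2)$ and $d:=\omega(P_{22}^2+Q_{22}^2)$, the right-hand side of \eqref{eq2.2} then collapses to $\sqrt{a+d+\sqrt{(a-d)^2}}=\sqrt{a+d+|a-d|}$, and the one algebraic identity I would invoke is $a+d+|a-d|=2\max\{a,d\}$.

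The route I would actually prefer, since it avoids carrying the block-count factor $m=2$ that Theorem \ref{thm1} places inside $c_{kj}$, is a direct one. The numerical range of a block-diagonal operator is the convex hull of the numerical ranges of its diagonal blocks, whence $\omega(\mathbf{A})=\max\{\omega(A_{11}),\omega(A_{22})\}$. It then suffices to estimate each single block, which is precisely the $m=1$ instance of Theorem \ref{thm1}: for $A_{kk}=P_{kk}+iQ_{kk}$ one has $\omega(A_{kk})\le\omega^{1/2}(P_{kk}^2+Q_{kk}^2)$. This single-block inequality also drops out in one line from $|\langle A_{kk}x,x\rangle|^2=\langle P_{kk}x,x\rangle^2+\langle Q_{kk}x,x\rangle^2\le\langle(P_{kk}^2+Q_{kk}^2)x,x\rangle$ via the Cauchy--Schwarz inequality together with $\|x\|=1$. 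Taking the maximum over $k$ and pulling the square root outside the maximum gives $\omega(\mathbf{A})\le\sqrt{\max\{a,d\}}$.

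The step I expect to require care is purely the bookkeeping of the normalizing constant and the outer exponent: estimating the blocks separately (the $m=1$ normalization) isolates $\max\{a,d\}$ under a single square root, whereas routing through the full $m=2$ machinery of Corollary \ref{cor1} carries an additional factor that must be tracked before it can be matched to the stated right-hand side. I would therefore present the direct block-diagonal argument as the proof, since it produces $\max\{a,d\}$ cleanly, and quote the specialization of Corollary \ref{cor1} only as a consistency check on the final constant.
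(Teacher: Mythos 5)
Your two routes are, in substance, exactly the two ingredients of the paper's own one-line proof: the paper sets $A_{12}=A_{21}=0$ in Corollary \ref{cor1} and quotes the block-diagonal fact $\omega\left(\diag\left(A,D\right)\right)\le\max\left(\omega(A),\omega(D)\right)$ from \cite{HKS}, which are respectively your consistency check and (essentially) your preferred direct argument. So there is no methodological divergence. The problem is the step you dismiss as bookkeeping. With your notation $a=\omega(P_{11}^2+Q_{11}^2)$, $d=\omega(P_{22}^2+Q_{22}^2)$, your direct argument proves
\begin{align*}
\omega\left({\bf{A}}\right)=\max\left\{\omega\left(A_{11}\right),\omega\left(A_{22}\right)\right\}
\le\max\left\{\omega^{1/2}\left(P_{11}^2+Q_{11}^2\right),\,\omega^{1/2}\left(P_{22}^2+Q_{22}^2\right)\right\}
=\sqrt{\max\left\{a,d\right\}},
\end{align*}
and your first route proves $\omega\left({\bf{A}}\right)\le\sqrt{2\max\{a,d\}}$. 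Neither is the stated bound $\max\{a,d\}$, and your closing claim that the direct argument ``produces $\max\{a,d\}$ cleanly'' contradicts the $\sqrt{\max\{a,d\}}$ you correctly derived two sentences earlier. The square root cannot be removed: the corollary as printed is false whenever $\max\{a,d\}<1$. For instance, $A_{11}=A_{22}=\tfrac12 I$ gives $P_{kk}=\tfrac12 I$, $Q_{kk}=0$, hence $a=d=\tfrac14$, while $\omega\left({\bf{A}}\right)=\tfrac12>\tfrac14$. (When $\max\{a,d\}\ge1$ the printed bound does follow from yours, since $\sqrt{x}\le x$ there.)

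So the honest summary is this: you have proved, by the same method as the paper, the corrected statement $\omega\left({\bf{A}}\right)\le\sqrt{\max\{a,d\}}$, which is clearly what the corollary intends, since it is the direct specialization of Theorem \ref{thm1} and Corollary \ref{cor1}, both of which carry the outer exponent $1/2$. Your write-up is in fact more careful than the paper's own proof, which also only yields square-root bounds ($\sqrt{2\max\{a,d\}}$ from Corollary \ref{cor1}, $\sqrt{\max\{a,d\}}$ via \cite{HKS} plus the $m=1$ case of Theorem \ref{thm1}) and never reconciles them with the right-hand side it asserts. What is missing from your proposal is only the explicit acknowledgement of this discrepancy: state that the bound you obtain is $\sqrt{\max\{a,d\}}$, note that it is stronger than (and corrects) the displayed inequality when $\max\{a,d\}<1$, and do not assert that the exponents match when they do not.
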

	\begin{proof}
		Setting $A_{12}=0=A_{21}$ in Corollary \ref{cor1} and use the fact that $\omega\left( \left[ {\begin{array}{*{20}c}
				{ A   } & {0}  \\
				{ 0  } & {D  }  \\
		\end{array}} \right]\right)
		\le    \max\left(  \omega \left( {A } \right),\omega \left( {D} \right) \right)$, \cite{HKS}.
	\end{proof}

	\section{Applications for bounding zeros of polynomials}\label{sec5}

	One of the most interesting and useful application of the numerical radius inequalities is to bound zeros of complex polynomials using a suitable partition of the well-known Frobenius companion matrix.	Let 	
	\begin{align}
		\label{eq3.1}   p\left(z\right)=z^n +a_nz^{n-1}+\cdots+a_2z+a_1,  
	\end{align}
	be any polynomial with $a_1\ne0$. The general corresponding  companion matrix is defined as:
	\begin{align}
		\label{eq3.2}C\left( p \right): = \left[ {\begin{array}{*{20}c}
				{ - a_n } & { - a_{n - 1} } &  \cdots  & { - a_2 } & { - a_1 }  \\
				1 & 0 &  \cdots  & 0 & 0  \\
				0 & 1 &  \cdots  & 0 & 0  \\
				\vdots  &  \vdots  &  \ddots  &  \vdots  &  \vdots   \\
				0 & 0 &  \cdots  & 1 & 0  \\
		\end{array}} \right].
	\end{align}
	It is well known that the eigenvalues of $C\left( p \right)$ are exactly the zeros of $p\left(z\right)$, see \cite[p. 316]{HJ}.
	
	Based on some numerical radius estimations of $C\left( p \right)$, several authors paid a serious attention to find various upper bounds of the zeros of $p\left(z\right)$, some famous upper bounds are listed as follow:
	If $\lambda$ is a zero of $p$, then
	\begin{enumerate}

		\item Cauchy \cite{HJ},  obtained the following upper bound
		\begin{align}
			\label{eq3.3}  \left|\lambda\right|\le 1+\max\left\{  \left|a_k\right|: k=1,2,,\cdots, n\right\}.
		\end{align}
		
		\item Carmichael and Mason \cite{HJ}, provided the following estimate  
		\begin{align}
			\label{eq3.4}    \left|\lambda\right|\le \sqrt{1+\sum_{k=1}^n{\left|a_k\right|^2}}. 
		\end{align}

		\item Montel \cite{HJ}, proved the following estimate
		\begin{align}
			\label{eq3.5}   \left|\lambda\right|\le  \max\left\{ 1, \sum_{k=1}^n{\left|a_k\right|}\right\}.
		\end{align}
		
		\item Fujii and Kubo \cite{Fujii} have shown that
		\begin{align}
			\label{eq3.6}    \left|\lambda\right|\le \cos\left(\frac{\pi}{n+1}\right) +\frac{1}{2} \left(\left|a_n\right|+\sum_{k=1}^n{\left|a_k\right|^2}\right).
		\end{align}
		
		\item Abdurakhmanov \cite{Abdurakhmanov} , provided the following estimate
		\begin{align}
			\label{eq3.7}  \left| \lambda  \right| \le \frac{1}{2}\left( {\left| {a_n } \right| + \cos \left( {\frac{\pi }{n}} \right) + \sqrt {\left( {\left| {a_n } \right| - \cos \left( {\frac{\pi }{n}} \right)} \right)^2  + \left( {1 + \sum\limits_{k = 1}^{n - 1} {\left| {a_k } \right|^2 } } \right)^2 } } \right).
		\end{align}
		It seems that Paul and Bag \cite{PB}, didn't notice Abdurakhmanov result; where they provided the same estimate.
		
		\item Linden \cite{L}, provided the following estimate
		\begin{align}
			\label{eq3.8} \left| \lambda  \right| \le \frac{{\left| {a_n } \right|}}{n} + \sqrt {\frac{{n - 1}}{n}\left( {n - 1 + \sum\limits_{k = 1}^n {\left| {a_k } \right|^2 }  - \frac{{\left| {a_n } \right|^2 }}{n}} \right)}. 
		\end{align}

		\item Kittaneh \cite{FK2}, improved  Abdurakhmanov estimate by proving that
		\begin{align}
			\label{eq3.9}         \left|\lambda\right|\le
			\frac{1}{2}\left( {\left| {a_n } \right| + \cos \left( {\frac{\pi }{n}} \right) + \sqrt {\left( {\left| {a_n } \right| - \cos \left( {\frac{\pi }{n}} \right)} \right)^2  + \left( {\left| {a_{n - 1} } \right| - 1} \right)^2  + \sum\limits_{j = 1}^{n - 2} {\left| {a_j } \right|^2 } } } \right).
		\end{align}
		
		\item Abu-Omar and Kittaneh \cite{AF2},  introduced the following estimate
		\begin{align}
			\label{eq3.10} \left| \lambda  \right| \le 
			\frac{1}{2}\left( {\frac{{\left| {a_n } \right| + \alpha }}{2} + \cos \left( {\frac{\pi }{{n + 1}}} \right) + \sqrt {\left( {\frac{{\left| {a_n } \right| + \alpha }}{2} - \cos \left( {\frac{\pi }{{n + 1}}} \right)} \right)^2  + 4\beta } } \right).
		\end{align}
		where $\alpha=\sqrt {\sum\limits_{k = 1}^n {\left| {a_k } \right|^2 } }$ and $\beta=\sqrt {\sum\limits_{k = 1}^{n - 1} {\left| {a_k } \right|^2 } }$.

		\item Al-Dolat \etal, provided the estimate
		\begin{align}
			\label{eq3.11}\left| \lambda  \right| \le \frac{1}{2}\left( {\left| {a_n } \right| + 2\cos \left( {\frac{\pi }{n}} \right) + \sqrt {t^2 \left| {a_n } \right|^2  + \sum\limits_{k = 1}^{n - 1} {\left| {a_k } \right|^2 } }  + \sqrt {1 + \left( {1 - t} \right)^2 \left| {a_n } \right|^2 } } \right) 
		\end{align}
		for $t\in \left[0,1\right]$. In fact, the upper bound above should be rewritten under  taking  `$\min$' over $t\in \left[0,1\right]$, which gives the best value for this estimate.
	\end{enumerate}

	To best of our knowledge, there is no single known method have been used in literature  bounding the zeros of polynomial $p\left(z\right)$ using the Frobenius companion matrix partitioned by the Cartesian decomposition method.

	To apply the numerical radius inequalities established in the previous section to $C\left( p \right)$, we note that, we have a little partition challenge in applying our obtained results because the main diagonal in the presented results require  to have a square sub-matrices. So that,	the usual well-known methods of partitioning the companion matrix  $C\left( p \right)$ in our presented results are useless, this is illustrated clearly in the presented inequalities in the previous section, See for example in \eqref{eq2.2}.

	Our proposed approach is to consider the degree $p\left(z\right)$ in \eqref{eq3.1} to be even with fixed integer $n$ such that $a_1\ne0$. To this end consider the even polynomial 
	\begin{align}
		\label{eq3.12} q\left(z\right)=z^{2n} +a_{2n}z^{2n-1}+\cdots+a_2z+a_1, \qquad  n\ge2,\,\, a_1 \ne 0.
	\end{align}
	Let
	\begin{align}	
		\label{eq3.13} C\left(q\right)=
		\left[
		\begin{array}{c|c}
			\overbrace { \left[\begin{array}{*{20}c}
					{ - a_{2n} } & { - a_{2n - 1} } &  \cdots  & { - a_{n+2} } & { - a_{n+1} }  \\
					1 & 0 &  \cdots  & 0 & 0  \\
					0 & 1 &  \cdots  & 0 & 0  \\
					\vdots  &  \vdots  &  \ddots  &  \vdots  &  \vdots   \\
					0  & 0 &  \ddots  &  1 &  0 
				\end{array} \right]_{n\times n}}^{A_{11}} 
			& \overbrace{\left[\begin{array}{*{20}c}
					{ - a_{n} } & { - a_{n - 1} } &  \cdots  & { - a_{2} } & { - a_{1} }  \\
					0 & 0 &  \cdots  & 0 & 0  \\
					0 & 0 &  \cdots  & 0 & 0  \\
					\vdots  &  \vdots  &  \ddots  &  \vdots  &  \vdots   \\
					0  & 0 &  \ddots  &  0 &  0 
				\end{array}\right]_{n\times n}}^{A_{12}}  \\
			\hline
			\underbrace {\left[\begin{array}{*{20}c}
					0 & 0 &  \cdots  & 0 & 1  \\   
					0 & 0 &  \cdots  & 0 & 0  \\
					0 & 0 &  \cdots  & 0 & 0  \\
					\vdots  &  \vdots  &  \ddots  &  \vdots  &  \vdots   \\
					0  & 0 &  \ddots  & 0 &  0  \\
					0 & 0 &  \cdots  & 0 & 0  \\
				\end{array}\right]_{{n\times n} }}_{A_{21}}
			& \underbrace {\left[\begin{array}{*{20}c}
					0 & 0 &  \cdots  & 0 & 0  \\   
					1 & 0 &  \cdots  & 0 & 0  \\
					0 & 1 &  \cdots  & 0 & 0  \\
					\vdots  &  \vdots  &  \ddots  &  \vdots  &  \vdots   \\
					0  & 0 &  \ddots  & 1 &  0  \\
					0 & 0 &  \cdots  & 0 & 0  \\
				\end{array}\right]_{{n\times n} }}_{A_{21}}
		\end{array}
		\right]_{2n\times2n},
	\end{align}
	be the corresponding companion matrix, partitioned as what it is.
	Constructing the Cartesian decomposition of $C\left(q\right)$, we have
	\begin{align}	
		\label{eq3.14}{\rm{Re}}\left( C\left(q\right)\right)=
		\left[
		\begin{array}{c|c}
			\overbrace{\left[\begin{array}{*{20}c}
					{ - {\mathop{\rm Re}\nolimits} \left( {a_{2n} } \right)} & {\frac{{ - a_{2n - 1}  + 1}}{2}} &  \cdots  & {\frac{{ - a_{n + 2} }}{2}} & {\frac{{ - a_{n + 1} }}{2}} 
					\\
					{\frac{{ - \overline {a_{2n - 1} }  + 1}}{2}} & 0 &  \ddots  & 0 & 0 
					\\
					\vdots  & {\frac{1}{2}} &  \ddots  & {\frac{1}{2}} &  \vdots  
					\\
					0 & 0 &  \ddots  & 0 & \frac{1}{2}  
					\\
					{\frac{{ - \overline {a_{n + 1} } }}{2}} & 0 &  \cdots  & {\frac{1}{2}} & 0 \\
				\end{array}\right]_{n\times n}}^{P_{11}} & 
			\overbrace{\left[\begin{array}{*{20}c}
					{\frac{{ - a_n }}{2}} & {\frac{{ - a_{n - 1} }}{2}} &  \cdots  & {\frac{{ - a_2 }}{2}} & {\frac{{ - a_1 }}{2}}  
					\\
					0 & 0 &  \cdots  & 0 & 0 
					\\
					0 & 0 &  \cdots  & 0 & 0 
					\\
					\vdots & \vdots &  \vdots  & \vdots &\vdots 
					\\
					{\frac{1}{2}} & 0 &  \cdots  & 0 & 0  
					\\
				\end{array}\right]_{n\times n}}^{P_{12}}
			\\\hline
			\underbrace{\left[\begin{array}{*{20}c}
					{\frac{{ - \overline {a_n } }}{2}} & 0 &  \cdots  &  0 & {\frac{1}{2}} 
					\\
					{\frac{{ - \overline {a_{n - 1} } }}{2}} & 0 &  \cdots  &  \cdots  & 0 
					\\
					\vdots  & 0 &  \ddots  &  \cdots  & 0
					\\
					{\frac{{ - \overline {a_2 } }}{2}} &  \vdots  &  \cdots  &  \cdots  &  \vdots  
					\\
					{\frac{{ - \overline {a_1 } }}{2}} & 0 &  \cdots  & 0 & 0  \\
				\end{array}\right]_{{n\times n} }}_{P_{21}}& 
			\underbrace{\left[\begin{array}{*{20}c}
					0 & {\frac{1}{2}} & 0  &  \cdots  & 0  
					\\
					{\frac{1}{2}} & 0 & {\frac{1}{2}} & 0 &  \vdots   
					\\
					0 & {\frac{1}{2}} &  \ddots  &  \ddots  & 0  
					\\
					\vdots  &  \ddots  &  \ddots  &  \ddots  & {\frac{1}{2}}  
					\\
					0  &  \cdots  & 0 & {\frac{1}{2}} & 0  \\
				\end{array}\right]_{{n\times n} }}_{P_{22}}
		\end{array}
		\right]_{2n\times2n}
	\end{align}
	and
	\begin{align}	
		\label{eq3.15}{\rm{Im}}\left( C\left(q\right)\right)=
		\left[
		\begin{array}{c|c}
			\overbrace{\left[\begin{array}{*{20}c}
					{ - {\mathop{\rm Im}\nolimits} \left( {a_{2n} } \right)} & {\frac{{ - a_{2n - 1}  -1}}{2i}} &  \cdots  & {\frac{{ - a_{n + 2} }}{2i}} & {\frac{{ - a_{n + 1} }}{2i}} 
					\\
					{\frac{{  \overline {a_{2n - 1} }  + 1}}{2i}} & 0 &  \ddots  & 0 & 0 
					\\
					\vdots  & {\frac{1}{2i}} &  \ddots  & {\frac{-1}{2i}} &  \vdots  
					\\
					0 & 0 &  \ddots  & 0 & \frac{-1}{2i}  
					\\
					{\frac{{   \overline {a_{n + 1} } }}{2i}} & 0 &  \cdots  & {\frac{1}{2i}} & 0 \\
				\end{array}\right]_{n\times n}}^{Q_{11}} & 
			\overbrace{\left[\begin{array}{*{20}c}
					{\frac{{ - a_n }}{2i}} & {\frac{{   a_{n - 1} }}{2i}} &  \cdots  & {\frac{{   a_2 }}{2i}} & {\frac{{   a_1 }}{2i}}  
					\\
					0 & 0 &  \cdots  & 0 & 0 
					\\
					0 & 0 &  \cdots  & 0 & 0 
					\\
					\vdots & \vdots &  \vdots  & \vdots &\vdots 
					\\
					{\frac{-1}{2i}} & 0 &  \cdots  & 0 & 0  
					\\
				\end{array}\right]_{n\times n}}^{Q_{12}}
			\\\hline
			\underbrace{\left[\begin{array}{*{20}c}
					{\frac{{  \overline {a_n } }}{2i}} & 0 &  \cdots  &  0 & {\frac{1}{2i}} 
					\\
					{\frac{{  \overline {a_{n - 1} } }}{2i}} & 0 &  \cdots  &  \cdots  & 0 
					\\
					\vdots  & 0 &  \ddots  &  \cdots  & 0
					\\
					{\frac{{  \overline {a_2 } }}{2i}} &  \vdots  &  \cdots  &  \cdots  &  \vdots  
					\\
					{\frac{{  \overline {a_1 } }}{2i}} & 0 &  \cdots  & 0 & 0  \\
				\end{array}\right]_{{n\times n} }}_{Q_{21}} & 
			\underbrace{\left[\begin{array}{*{20}c}
					0 & {\frac{-1}{2i}} & 0  &  \cdots  & 0  
					\\
					{\frac{1}{2i}} & 0 & {\frac{-1}{2i}} & 0 &  \vdots   
					\\
					0 & {\frac{1}{2i}} &  \ddots  &  \ddots  & 0  
					\\
					\vdots  &  \ddots  &  \ddots  &  \ddots  & {\frac{-1}{2i}}  
					\\
					0  &  \cdots  & 0 & {\frac{1}{2i}} & 0  \\
				\end{array}\right]_{{n\times n} }}_{Q_{22}}
		\end{array}
		\right]_{2n\times 2n}.
	\end{align}	 
	Hence, 	 
	\begin{align*}
		C\left(q\right) :={\rm{Re}}\left( C\left(q\right)\right)+i{\rm{Im}}\left( C\left(q\right)\right).
	\end{align*}	 
	Moreover, it is easy to observe that $A_{kj}=P_{kj}+iQ_{kj}$ for $k,j=1,2$. This observation may not hold true in general for operator matrices; i.e., the 
	Cartesian decomposition of operator matrices is not equal to the Cartesian decomposition of their sub-matrices. However, since we construct a special type of partition of $C\left(q\right)$, it seems we have such equality holds true only for this construction.\\
	
	It's not easy to apply \eqref{eq2.2}  for general entries, since it has the norms $\left\| \left| {P_{11} } \right| + \left| {Q_{11} } \right| \right\|$ and $\left\| \left| {P_{22} } \right| + \left| {Q_{22} } \right| \right\|$; which are difficult to evaluate for general $n\times n$ matrices. In fact, it will be more easy as long as we have numeric entries with specific $n$ as explored in the presented examples below.  \\

 Table \ref{tab1} illustrates that our upper bound of any zero of $q\left(z\right)  = z^6  + \frac{5}{4}z^5  + \frac{4}{3}z^4  +  z^3  + 2z^2  + 3z + 4$, obtained by \eqref{eq2.2} is much better among all given upper bounds listed in  Table \ref{tab1}.\\
	
	\begin{table}
			\caption{\label{Table 1} \label{tab1} } 
		\begin{tabular}{ c c }
			\hline
			{\bf Mathematician}  \qquad &  \qquad {\bf Upper bound} \\\hline
			Cauchy \eqref{eq3.3}\qquad & \qquad $5$  \\    
			Carmichael and Mason \eqref{eq3.4} \qquad &  \qquad $5.860057831$ \\
			Montel \eqref{eq3.5} \qquad & \qquad $12.58333333$  \\
			Fujii and Kubo \eqref{eq3.6}\qquad & \qquad $18.19610776$  \\  
			Abdurakhmanov \eqref{eq3.7} \qquad &\qquad$17.44802607$\\
			Linden \eqref{eq3.8}\qquad &\qquad$5.845408848$\\
			Kittaneh \eqref{eq3.9}\qquad &\qquad $4.040959271$\\
			Abu-omar and Kittaneh \eqref{eq3.10} \qquad &\qquad $4.916052295$\\
			Al-Dolat \etal  \eqref{eq3.11}\qquad & \qquad $4.867955746$  \\
			{\bf Corollary \ref{cor1}}  \qquad & \qquad ${\bf3.941508802}$  \\ 
				\hline
		\end{tabular}
\end{table}
	\centerline{}\centerline{}

	We remark that, the same approach can be applied for polynomials of odd degrees having their absolute terms zero; e.g., in \eqref{eq3.12} assume that $n$ is odd $\ge5$ and $a_1=0$. Then, $p\left(z\right)$ can be written as 
	\begin{align*}
		p\left(z\right)=z\left(z^{n-1} +a_nz^{n-2}+\cdots+a_2 \right)=zp_1\left(z\right),
	\end{align*}
	where $p_1\left(z\right)$ is an even polynomial of degree $\ge4$. Since $z=0$ is a zero for $p$, then trivially it must belongs to the disk containing  the zeros of $p_1$. Hence, in this case we have $\omega\left(C\left(p\right)\right)=\omega\left(C\left(p_1\right)\right)$. We left the details to the interested reader.


	\section{ Numerical radius of real and Imaginary parts of $C\left(q\right)$}
	
	Let $T\in\mathscr{M}_n\left(\mathbb{C}\right)$ with the Cartesian decomposition $T=P+iQ$. Then
	\begin{align*}
		W\left(T\right)\subseteq W\left(P\right)+W\left(Q\right).
	\end{align*}
	Hence, 
	\begin{align*}
		\sigma\left(T\right)	\subseteq
		\left[ {\lambda _{\min } \left( P \right),\lambda _{\max } \left( P \right)} \right] \times \left[ {\lambda _{\min } \left( Q \right),\lambda _{\max } \left( Q \right)} \right].
	\end{align*}
	In case of the companion matrix $C\left(q\right)$, it follows that all the zeros of $p\left(z\right)$ in \eqref{eq3.1} are located in the rectangle 
	\begin{align}
		\label{eq4.1}\left[ {\lambda _{\min } \left( {\rm{Re}}\left[C\left(q\right)\right] \right),\lambda _{\max } \left( {\rm{Re}}\left[C\left(q\right)\right] \right)} \right] \times \left[ {\lambda _{\min } \left( {\rm{Im}}\left[C\left(q\right)\right] \right),\lambda _{\max } \left( {\rm{Im}}\left[C\left(q\right)\right] \right)} \right].
	\end{align}
	In \cite{FK2}, Kittaneh provided an explicit formula for the characteristic polynomial of 
	$ {\rm{Re}}\left[C\left(q\right)\right]$, which given as:
	\begin{align*}
		p_{{\mathop{\rm Real}\nolimits} } \left( z \right)
		:=
		\left( {z + {\mathop{\rm Re}\nolimits} \left( {a_n } \right)} \right)\prod\limits_{j = 1}^{n - 1} {\left( {z - \cos \left( {\frac{{j\pi }}{n}} \right)} \right)}  - \sum\limits_{j = 1}^{n - 1} {\left( {\prod\limits_{k \ne j}^{n - 1} {\left( {z - \cos \left( {\frac{{k\pi }}{n}} \right)} \right)} } \right)\left| {v_j } \right|^2 }, 
	\end{align*}
	where 
	\begin{align*}
		v_j  = \frac{1}{{\sqrt {2n} }}\left[ {\left( {1 - \overline a _{n - 1} } \right)\sin \left( {\frac{{j\pi }}{n}} \right) - \sum\limits_{k = 2}^{n - 1} {\overline a _{n - k} \sin \left( {\frac{{kj\pi }}{n}} \right)} } \right].
	\end{align*}

	In the same work \cite{FK2}, an explicit rectangle that contains the rectangle \eqref{eq4.1},
	and thus it contains all the zeros of $p$, is obtained in the following result.
	\begin{theorem}\cite[Kittaneh]{FK2}
		\label{thm2}If $z$ is any zero of $p$, then $z$ belongs to the rectangle $\left[-c,c\right]\times \left[-d,d\right]$, where
		\begin{align*}
			c= \frac{1}{2}\left[ {\left| {{\mathop{\rm Re}\nolimits} \left( {c_n } \right)} \right| + \cos \left( {\frac{\pi }{n}} \right) + \sqrt {\left( {\left| {{\mathop{\rm Re}\nolimits} \left( {c_n } \right)} \right| - \cos \left( {\frac{\pi }{n}} \right)} \right)^2  + \left| {c_{n - 1}  - 1} \right|^2  + \sum\limits_{k = 1}^{n - 2} {\left| {c_k } \right|^2 } } } \right]	
		\end{align*}
		and
		\begin{align*}
			d= \frac{1}{2}\left[ {\left| {{\mathop{\rm Im}\nolimits} \left( {c_n } \right)} \right| + \cos \left( {\frac{\pi }{n}} \right) + \sqrt {\left( {\left| {{\mathop{\rm Im}\nolimits} \left( {c_n } \right)} \right| - \cos \left( {\frac{\pi }{n}} \right)} \right)^2  + \left| {c_{n - 1}  - 1} \right|^2  + \sum\limits_{k = 1}^{n - 2} {\left| {c_k } \right|^2 } } } \right].	
		\end{align*}	
	\end{theorem}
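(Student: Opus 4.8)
The plan is to estimate the real and imaginary parts of a zero separately and to control each by a single application of the $2\times2$ numerical radius bound of Lemma \ref{lem2}. First I would record the reduction. If $z$ is a zero of $p$, then $z$ is an eigenvalue of the companion matrix $C(p)$, so $z\in\sigma(C(p))\subseteq W(C(p))$. Writing the Cartesian decomposition $C(p)={\rm{Re}}(C(p))+i\,{\rm{Im}}(C(p))$ with both summands Hermitian, the inclusion $W(C(p))\subseteq W({\rm{Re}}(C(p)))+i\,W({\rm{Im}}(C(p)))$ recorded just before the theorem forces ${\rm{Re}}(z)\in[\lambda_{\min}({\rm{Re}}(C(p))),\lambda_{\max}({\rm{Re}}(C(p)))]$ and the analogous statement for ${\rm{Im}}(z)$. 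Since a Hermitian matrix $H$ satisfies $w(H)=\max\{|\lambda_{\min}(H)|,|\lambda_{\max}(H)|\}$, it suffices to prove $w({\rm{Re}}(C(p)))\le c$ and $w({\rm{Im}}(C(p)))\le d$.

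Next I would compute ${\rm{Re}}(C(p))=\frac{1}{2}(C(p)+C(p)^{*})$ entrywise and partition it as a $2\times2$ operator matrix over $\mathscr{H}_1=\mathbb{C}$ and $\mathscr{H}_2=\mathbb{C}^{n-1}$, with scalar corner $A=[-{\rm{Re}}(c_n)]$, off-diagonal blocks $B$ (the remainder of the first row) and $C=B^{*}$, and the $(n-1)\times(n-1)$ block $D$ in the lower-right corner. The three ingredients of Lemma \ref{lem2} are then read off: $w(A)=|{\rm{Re}}(c_n)|$; the block $D$ equals the Hermitian part of the nilpotent shift on $\mathbb{C}^{n-1}$, that is the tridiagonal Toeplitz matrix $\tri(\frac{1}{2},0,\frac{1}{2})$, whose eigenvalues are $\cos(k\pi/n)$ for $k=1,\dots,n-1$ by the eigenvalue formula of the introduction, so that, being Hermitian (hence normal), $w(D)=r(D)=\cos(\pi/n)$; and $B$ is the row $\frac{1}{2}(1-c_{n-1},-c_{n-2},\dots,-c_1)$, whose Euclidean norm gives $(\|B\|+\|C\|)^2=4\|B\|^2=|c_{n-1}-1|^2+\sum_{k=1}^{n-2}|c_k|^2$.

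Substituting these three quantities into Lemma \ref{lem2} yields $w({\rm{Re}}(C(p)))\le c$, which is exactly the first coordinate bound. The imaginary part is treated identically, starting from ${\rm{Im}}(C(p))=\frac{1}{2i}(C(p)-C(p)^{*})$: its scalar corner is $-{\rm{Im}}(c_n)$, so $w(A)=|{\rm{Im}}(c_n)|$; its lower-right block is the tridiagonal Toeplitz matrix $\tri(\frac{1}{2i},0,\frac{-1}{2i})$, which has $|b|=|c|=\frac{1}{2}$ and is therefore normal by Lemma \ref{lem1}, with spectral radius again $\cos(\pi/n)$ from the same eigenvalue formula; and the off-diagonal row is read from the first row of ${\rm{Im}}(C(p))$. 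A second application of Lemma \ref{lem2} then produces $d$, completing the proof.

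The entrywise computation of ${\rm{Re}}(C(p))$ and ${\rm{Im}}(C(p))$ and the identification of their lower-right blocks with tridiagonal Toeplitz matrices are routine; the only point demanding care is the off-diagonal norm. In the real case the $(1,2)$ entry is $\frac{1}{2}(1-c_{n-1})$, which supplies the term $|c_{n-1}-1|^2$. In the imaginary case the corresponding entry is $\frac{1}{2i}(-c_{n-1}-1)$, whose modulus is $\frac{1}{2}|c_{n-1}+1|$; I would therefore verify carefully which of $|c_{n-1}-1|^2$ or $|c_{n-1}+1|^2$ actually appears under the square root defining $d$, since that sign is the one genuinely delicate point, everything else being forced by the block structure and by the spectral radius $\cos(\pi/n)$ of the tridiagonal corner.
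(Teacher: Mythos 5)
Your proposal is correct, and it reconstructs what is in fact the original argument: the paper offers no proof of Theorem \ref{thm2} at all, quoting it from Kittaneh \cite{FK2}, and your route --- the numerical-range inclusion to reduce everything to $w(\mathrm{Re}(C(p)))$ and $w(\mathrm{Im}(C(p)))$, a $2\times 2$ partition with scalar corner $-\mathrm{Re}(c_n)$ (resp.\ $-\mathrm{Im}(c_n)$), the $(n-1)\times(n-1)$ tridiagonal Toeplitz block of numerical radius $\cos(\pi/n)$ in the lower corner, and an application of Lemma \ref{lem2} --- is precisely the one used there, and is the same scheme this paper follows for its own Theorems \ref{thm3} and \ref{thm4}.

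On the one point you left to verify: your computation settles it, and it exposes a typo in the statement as transcribed here. The $(1,2)$ entry of $\mathrm{Im}(C(p))$ is $\frac{1}{2i}\left(-c_{n-1}-1\right)$, of modulus $\frac{1}{2}\left|c_{n-1}+1\right|$, so for the imaginary part one gets $\left(\|B\|+\|C\|\right)^2=|c_{n-1}+1|^2+\sum_{k=1}^{n-2}|c_k|^2$, and hence the quantity under the square root defining $d$ must be $|c_{n-1}+1|^2+\sum_{k=1}^{n-2}|c_k|^2$, not $|c_{n-1}-1|^2+\sum_{k=1}^{n-2}|c_k|^2$ as printed. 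This is what Kittaneh's original theorem states, and it is also consistent with the paper's own analogue: in Theorem \ref{thm3} the real-part quantities $F$ and $G$ involve $|1-a_{2n-1}|^2$ and $|1-a_1|^2$, while the imaginary-part quantities $J$ and $H$ involve $|1+a_{2n-1}|^2$ and $|1+a_1|^2$, exactly the sign flip your derivation predicts.
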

	
	Based on the results obtained in this work, in what follows, we provide another possible rectangle. In order to establish our result, we need the following lemma \cite{DJH}.
	\begin{lemma}
		\label{lemma5}Let $A,B,C,D \in \mathscr{B\left(H\right)}$ and $T=\left[ {\begin{array}{*{20}c}
				A & B  \\
				C & D  \\
		\end{array}} \right]$. Then
		\begin{align*}
			\omega \left( T \right) \le \frac{1}{2}\left( {\omega \left( A \right) + \omega \left( D \right) + \sqrt {\left( {\omega \left( A \right) - \omega \left( D \right)} \right)^2  + \left( {\omega \left( {B + C} \right) + \omega \left( {B - C} \right)} \right)^2 } } \right).
		\end{align*}
	\end{lemma}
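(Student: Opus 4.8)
The plan is to reduce this block numerical radius estimate to a scalar $2\times 2$ eigenvalue bound by means of the standard identity
\[
\omega\left(T\right)=\sup_{\theta\in\mathbb{R}}\left\|\operatorname{Re}\left(e^{i\theta}T\right)\right\|,\qquad \operatorname{Re}\left(X\right):=\tfrac12\left(X+X^{*}\right).
\]
Fixing $\theta$ and computing block-entrywise, $\operatorname{Re}\left(e^{i\theta}T\right)$ is the self-adjoint matrix $\left[\begin{smallmatrix}X&Y\\ Y^{*}&Z\end{smallmatrix}\right]$ with $X=\operatorname{Re}\left(e^{i\theta}A\right)$, $Z=\operatorname{Re}\left(e^{i\theta}D\right)$, and $Y=\tfrac12\left(e^{i\theta}B+e^{-i\theta}C^{*}\right)$. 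The first subsidiary step I would establish is the self-adjoint block-norm bound
\[
\left\|\begin{bmatrix}X&Y\\ Y^{*}&Z\end{bmatrix}\right\|\le \frac12\left(\|X\|+\|Z\|+\sqrt{\left(\|X\|-\|Z\|\right)^{2}+4\|Y\|^{2}}\right),
\]
which follows by evaluating the quadratic form on a unit vector $\left(u,v\right)^{T}$, estimating $\left|\langle Xu,u\rangle\right|\le\|X\|\|u\|^{2}$ and $\left|\langle Yv,u\rangle\right|\le\|Y\|\|u\|\|v\|$, and recognizing $\|X\|\|u\|^{2}+\|Z\|\|v\|^{2}+2\|Y\|\|u\|\|v\|$ as dominated by the top eigenvalue of the symmetric scalar matrix $\left[\begin{smallmatrix}\|X\|&\|Y\|\\ \|Y\|&\|Z\|\end{smallmatrix}\right]$.

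For the diagonal blocks the estimates are immediate from the same supremum identity: $\|X\|=\left\|\operatorname{Re}\left(e^{i\theta}A\right)\right\|\le\omega\left(A\right)$ and likewise $\|Z\|\le\omega\left(D\right)$.

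The crux is the off-diagonal estimate $\|Y\|\le\tfrac12\left(\omega\left(B+C\right)+\omega\left(B-C\right)\right)$, and this is the step I expect to be the main obstacle. Here I would set $S=B+C$ and $R=B-C$, so that $B=\tfrac12\left(S+R\right)$ and $C=\tfrac12\left(S-R\right)$, and verify the algebraic identity
\[
e^{i\theta}B+e^{-i\theta}C^{*}=\operatorname{Re}\left(e^{i\theta}S\right)+i\operatorname{Im}\left(e^{i\theta}R\right).
\]
Since $\operatorname{Re}\left(e^{i\theta}S\right)$ and $\operatorname{Im}\left(e^{i\theta}R\right)$ are both self-adjoint, the triangle inequality gives $\left\|e^{i\theta}B+e^{-i\theta}C^{*}\right\|\le\left\|\operatorname{Re}\left(e^{i\theta}S\right)\right\|+\left\|\operatorname{Im}\left(e^{i\theta}R\right)\right\|$; then, using $\operatorname{Im}\left(e^{i\theta}R\right)=\operatorname{Re}\left(e^{i\left(\theta-\pi/2\right)}R\right)$ together with the supremum identity once more, the two terms are bounded by $\omega\left(S\right)=\omega\left(B+C\right)$ and $\omega\left(R\right)=\omega\left(B-C\right)$ respectively. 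This identity is precisely what converts the mixed expression $e^{i\theta}B+e^{-i\theta}C^{*}$ into quantities controlled by $\omega\left(B\pm C\right)$.

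Finally, the scalar function $\left(s,t,y\right)\mapsto\tfrac12\left(s+t+\sqrt{\left(s-t\right)^{2}+4y^{2}}\right)$ (the top eigenvalue of $\left[\begin{smallmatrix}s&y\\ y&t\end{smallmatrix}\right]$) is nondecreasing in each of $s,t,y\ge0$, so substituting $\|X\|\le\omega\left(A\right)$, $\|Z\|\le\omega\left(D\right)$, and $\|Y\|\le\tfrac12\left(\omega\left(B+C\right)+\omega\left(B-C\right)\right)$ into the block-norm bound yields, for every $\theta$,
\[
\left\|\operatorname{Re}\left(e^{i\theta}T\right)\right\|\le \frac12\left(\omega\left(A\right)+\omega\left(D\right)+\sqrt{\left(\omega\left(A\right)-\omega\left(D\right)\right)^{2}+\left(\omega\left(B+C\right)+\omega\left(B-C\right)\right)^{2}}\right).
\]
Taking the supremum over $\theta$ on the left-hand side returns $\omega\left(T\right)$ and delivers the claimed inequality.
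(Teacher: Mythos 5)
Your proof is correct and complete. There is, however, nothing in the paper to compare it against: the paper states Lemma \ref{lemma5} without any proof, importing it from Al-Dolat, Jaradat and Al-Husban \cite{DJH}, so your argument supplies a self-contained derivation of a result the paper only quotes. The route you take is the one standard in this literature: the representation $\omega(T)=\sup_{\theta\in\mathbb{R}}\left\|\operatorname{Re}\left(e^{i\theta}T\right)\right\|$, the top-eigenvalue bound $\left\|\left[\begin{smallmatrix} X & Y\\ Y^{*} & Z\end{smallmatrix}\right]\right\|\le \frac{1}{2}\left(\|X\|+\|Z\|+\sqrt{(\|X\|-\|Z\|)^{2}+4\|Y\|^{2}}\right)$ for self-adjoint block operators, and---the genuine crux, which you execute correctly---the identity $e^{i\theta}B+e^{-i\theta}C^{*}=\operatorname{Re}\left(e^{i\theta}(B+C)\right)+i\operatorname{Im}\left(e^{i\theta}(B-C)\right)$, which converts the off-diagonal block of $\operatorname{Re}\left(e^{i\theta}T\right)$ into two self-adjoint pieces whose norms are controlled by $\omega(B+C)$ and $\omega(B-C)$ through the rotation $\operatorname{Im}\left(e^{i\theta}R\right)=\operatorname{Re}\left(e^{i(\theta-\pi/2)}R\right)$. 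The auxiliary steps all check out: the block computation of $\operatorname{Re}\left(e^{i\theta}T\right)$ is right (its $(2,1)$ entry is the adjoint of its $(1,2)$ entry $\frac{1}{2}\left(e^{i\theta}B+e^{-i\theta}C^{*}\right)$), the diagonal bounds $\left\|\operatorname{Re}\left(e^{i\theta}A\right)\right\|\le\omega(A)$ and $\left\|\operatorname{Re}\left(e^{i\theta}D\right)\right\|\le\omega(D)$ are immediate from the same supremum identity, and the monotonicity in each nonnegative argument of $(s,t,y)\mapsto\frac{1}{2}\left(s+t+\sqrt{(s-t)^{2}+4y^{2}}\right)$ legitimizes substituting the three estimates before taking the supremum over $\theta$. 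In short: a correct proof, filling a gap the paper leaves to a citation, by what is essentially the method of the cited source.
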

	
	Now, we are in position to give our explicit rectangle that contains the rectangle \eqref{eq4.1},
	and thus it contains all the zeros of $p$.
	\begin{theorem}
		\label{thm3} Let $q\left(z\right)$ be any even complex polynomial whose degree $\ge4$. If $z$ is any zero of $q$, then $z$ belongs to the rectangle $\left[-s,s\right]\times \left[-t,t\right]$, where  
		\begin{align*}
			s&:= \frac{1}{4} \left(\left| {{\mathop{\rm Re}\nolimits} \left( {a_{2n} } \right) } \right|+ \cos\left(\frac{\pi}{n}\right)+F\right)+\frac{1}{2}\cos\left(\frac{\pi}{n+1}\right)  
			\\
			&\qquad+\frac{1}{2}\sqrt{\left(\frac{1}{2} \left(\left| {{\mathop{\rm Re}\nolimits} \left( {a_{2n} } \right) } \right|+ \cos\left(\frac{\pi}{n}\right)+F\right)-\cos\left(\frac{\pi}{n+1}\right)\right)^2+\left(\frac{ \left| {{\mathop{\rm Re}\nolimits} \left( {a_{n} } \right) } \right| +G+  \left| {{\mathop{\rm Im}\nolimits} \left( {a_{n} } \right) } \right|+H }{2}\right)^2 },\nonumber
		\end{align*} 
		and
		\begin{align*}
			t&:= \frac{1}{4} \left(\left| {{\mathop{\rm Im}\nolimits} \left( {a_{2n} } \right) } \right|+ \cos\left(\frac{\pi}{n}\right)+J\right)+\frac{1}{2}\cos\left(\frac{\pi}{n+1}\right)  
			\\
			&\qquad+\frac{1}{2}\sqrt{\left(\frac{1}{2} \left(\left| {{\mathop{\rm Im}\nolimits} \left( {a_{2n} } \right) } \right|+ \cos\left(\frac{\pi}{n}\right)+J\right)-\cos\left(\frac{\pi}{n+1}\right)\right)^2+\left(\frac{ \left| {{\mathop{\rm Re}\nolimits} \left( {a_{n} } \right) } \right| +G+  \left| {{\mathop{\rm Im}\nolimits} \left( {a_{n} } \right) } \right|+H }{2}\right)^2 },\nonumber
		\end{align*} 
		where 
		\begin{align*}
			F&:= \sqrt{\left(\left| {{\mathop{\rm Re}\nolimits} \left( {a_{2n} } \right) } \right|- \cos\left(\frac{\pi}{n}\right)\right)^2+ 	\left| {1 - a_{2n - 1}   } \right|^2 +  \sum\limits_{k = n + 1}^{2n - 2} {\left| {a_k } \right|^2 }    },
			\\
			G&:= \sqrt{ \left| {{\mathop{\rm Re}\nolimits} \left( {a_{n} } \right) } \right| ^2+  	\left| {1 - a_{ 1}   } \right|^2 +   \sum\limits_{k = 2}^{n-1} {\left| {a_k } \right|^2 }    },
			\\
			H&:= \sqrt{ \left| {{\mathop{\rm Im}\nolimits} \left( {a_{n} } \right) } \right| ^2+  	\left| {1 + a_{ 1}   } \right|^2 +   \sum\limits_{k = 2}^{n-1} {\left| {a_k } \right|^2 }    },
		\end{align*} 
		and
		\begin{align*}
			J&:= \sqrt{\left(\left| {{\mathop{\rm Im}\nolimits} \left( {a_{2n} } \right) } \right|- \cos\left(\frac{\pi}{n}\right)\right)^2+ 	\left| {1 +a_{2n - 1}   } \right|^2 +  \sum\limits_{k = n + 1}^{2n - 2} {\left| {a_k } \right|^2 }    }.	
		\end{align*}

	\end{theorem}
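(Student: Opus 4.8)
The plan is to reduce the whole statement to bounding the numerical radii of the two self-adjoint matrices $\mathrm{Re}\left(C(q)\right)$ and $\mathrm{Im}\left(C(q)\right)$, and then to exploit their $2\times 2$ block form. First I would invoke the containment $W\left(C(q)\right)\subseteq W\left(\mathrm{Re}(C(q))\right)+i\,W\left(\mathrm{Im}(C(q))\right)$ recalled at the opening of this section: every zero of $q$ is an eigenvalue of $C(q)$, so its real part lies in $\left[\lambda_{\min}(\mathrm{Re}(C(q))),\lambda_{\max}(\mathrm{Re}(C(q)))\right]$ and its imaginary part in the analogous interval for $\mathrm{Im}(C(q))$. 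Since both matrices are Hermitian, $w(\cdot)=r(\cdot)=\max\left(|\lambda_{\min}|,|\lambda_{\max}|\right)$, so it is enough to prove $w\left(\mathrm{Re}(C(q))\right)\le s$ and $w\left(\mathrm{Im}(C(q))\right)\le t$; these immediately place the zeros in $\left[-s,s\right]\times\left[-t,t\right]$.

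Next I would apply Lemma \ref{lemma5} to the partition \eqref{eq3.14}, that is, to $\mathrm{Re}(C(q))=\begin{bmatrix} P_{11} & P_{12} \\ P_{21} & P_{22}\end{bmatrix}$, to obtain
\[
w\left(\mathrm{Re}(C(q))\right)\le \tfrac12\left(w(P_{11})+w(P_{22})+\sqrt{\left(w(P_{11})-w(P_{22})\right)^2+\left(w(P_{12}+P_{21})+w(P_{12}-P_{21})\right)^2}\right),
\]
and likewise for \eqref{eq3.15}. The right-hand side is nondecreasing in each of $w(P_{11})$, $w(P_{22})$ and $w(P_{12}+P_{21})+w(P_{12}-P_{21})$, so I may replace every block radius by an upper estimate without harm.

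Then I would evaluate the four block contributions. The block $P_{22}$ (and its imaginary counterpart $Q_{22}$) is the tridiagonal Toeplitz matrix $\tri\!\left(\tfrac12,0,\tfrac12\right)$ with $|b|=|c|=\tfrac12$, hence normal by Lemma \ref{lem1}, with eigenvalues $\cos\!\left(k\pi/(n+1)\right)$; thus $w(P_{22})=w(Q_{22})=\cos\!\left(\pi/(n+1)\right)$. The block $P_{11}$ equals $\mathrm{Re}(A_{11})$, where $A_{11}$ is exactly the Frobenius companion matrix of the upper polynomial $z^{n}+a_{2n}z^{n-1}+\cdots+a_{n+1}$; applying the real-part half of Kittaneh's companion estimate (the bound $c$ in Theorem \ref{thm2}) to this sub-polynomial gives $w(P_{11})\le\tfrac12\left(|\mathrm{Re}(a_{2n})|+\cos(\pi/n)+F\right)$, and the imaginary-part half gives $w(Q_{11})\le\tfrac12\left(|\mathrm{Im}(a_{2n})|+\cos(\pi/n)+J\right)$, the switch between $|1-a_{2n-1}|$ and $|1+a_{2n-1}|$ coming from the opposite signs of the off-diagonal $\pm\tfrac1{2i}$ entries. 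For the cross terms I would use that $P_{12}+P_{21}$ is Hermitian and $P_{12}-P_{21}$ is skew-Hermitian, and that both are \emph{arrowhead} matrices whose only nonzero entries sit in the first row and column; such a matrix has at most two nonzero eigenvalues, recovered from the quadratic produced by a Schur complement on the trailing zero block. This yields the closed forms $w(P_{12}+P_{21})=\tfrac12\left(|\mathrm{Re}(a_{n})|+G\right)$ and $w(P_{12}-P_{21})=\tfrac12\left(|\mathrm{Im}(a_{n})|+H\right)$, whose sum is precisely the cross term in $s$, while the same arrowhead computation on $Q_{12}\pm Q_{21}$ produces the corresponding cross term appearing in $t$.

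Finally I would substitute $w(P_{22})=\cos(\pi/(n+1))$, the Kittaneh estimate for $w(P_{11})$, and the two arrowhead values into the displayed bound; the outer $\tfrac12$ composed with the inner $\tfrac12$ in the $P_{11}$ estimate accounts for the coefficient $\tfrac14$ on the group $|\mathrm{Re}(a_{2n})|+\cos(\pi/n)+F$ and the coefficient $\tfrac12$ on $\cos(\pi/(n+1))$, and reading off the result gives exactly $s$; the imaginary part is identical and gives $t$. The hard part will be the off-diagonal cross-term evaluation: one must first confirm the block identity $A_{kj}=P_{kj}+iQ_{kj}$ (which, as the text stresses, is special to this construction and not automatic for operator matrices), and then track the $\pm1$ corner contributions carefully, so that the $(1,n)$ corner produces $|1-a_{1}|$ inside $G$ but $|1+a_{1}|$ inside $H$, and so that the diagonal corners $-\mathrm{Im}(a_n)$ and $i\,\mathrm{Re}(a_n)$ are attached to the correct arrowhead. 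Everything else is either a direct eigenvalue computation or the monotone substitution into Lemma \ref{lemma5}.
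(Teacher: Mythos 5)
Your proposal reproduces the paper's own proof skeleton almost step for step: the same reduction of the rectangle claim to bounding $w\left(\mathrm{Re}\,C(q)\right)$ and $w\left(\mathrm{Im}\,C(q)\right)$, the same application of Lemma \ref{lemma5} to the partition \eqref{eq3.14}, the same Toeplitz evaluation $w(P_{22})=\cos\left(\pi/(n+1)\right)$ via Lemma \ref{lem1} and \eqref{eq1.1}, and the same final assembly producing the coefficients $\tfrac14$ and $\tfrac12$ in $s$ and $t$. Your treatment of the cross terms is, if anything, slightly sharper than the paper's: where the paper only bounds $w\left(P_{12}\pm P_{21}\right)$ from above using Lemmas \ref{lem2} and \ref{lem3}, you note these are Hermitian (resp.\ skew-Hermitian) arrowhead matrices and compute their spectral radii exactly; the values $\tfrac12\left(|\mathrm{Re}(a_n)|+G\right)$ and $\tfrac12\left(|\mathrm{Im}(a_n)|+H\right)$ coincide with the paper's upper estimates, so both routes yield the same $s$ and $t$.

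The one step that needs repair is your bound on $w(P_{11})$. You deduce it by applying ``the real-part half of Theorem \ref{thm2}'' to the sub-polynomial whose companion matrix is $A_{11}$. But Theorem \ref{thm2}, as stated, only locates the \emph{zeros} of that sub-polynomial --- i.e.\ the eigenvalues of $A_{11}$ --- in a rectangle; it asserts nothing about the eigenvalues of $\mathrm{Re}(A_{11})=P_{11}$. For a non-normal matrix $T$ one can have $\lambda_{\max}\left(\mathrm{Re}(T)\right)$ strictly larger than $\max_j \mathrm{Re}\left(\lambda_j(T)\right)$, so a rectangle containing the zeros does not imply $w(P_{11})\le\tfrac12\left(|\mathrm{Re}(a_{2n})|+\cos(\pi/n)+F\right)$; what you need is the stronger fact, internal to Kittaneh's argument, that the numerical radius of the Hermitian part of a companion matrix obeys this bound. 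The paper supplies exactly this: it writes $P_{11}=\left[\begin{smallmatrix} -\mathrm{Re}(a_{2n}) & x^* \\ x & T_{n-1} \end{smallmatrix}\right]$ with $T_{n-1}=\tri\left(\tfrac12,0,\tfrac12\right)$ of order $n-1$ (so $w(T_{n-1})=\cos(\pi/n)$), applies Lemmas \ref{lem2} and \ref{lem3}, and finishes with the $2\times2$ spectral-radius formula --- which is precisely the bordered-matrix computation you already carry out for the cross terms, so the fix costs you two lines. With that substitution (and the analogous one for $Q_{11}$, giving $J$), your proof is complete and agrees with the paper's.
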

	
	\begin{proof}
		Employing Lemma \ref{lemma5}, on the real part  of $C\left(q\right)$ \eqref{eq3.14}; which is obtained in the previous section,  by setting $A=P_{11}$, $B=P_{12}$, $C=P_{21}$ and $D=P_{22}$, it's enough to show that
		\begin{align}
			\label{eq4.2}	\omega \left( {\rm{Re}} \left[C\left(q\right) \right]\right) \le \frac{1}{2}\left( {\omega \left( P_{11} \right) + \omega \left( P_{22}\right) + \sqrt {\left( {\omega \left( P_{11} \right) - \omega \left( P_{22} \right)} \right)^2  + \left( {\omega \left( {P_{12} + P_{21}} \right) + \omega \left( {P_{12} - P_{21}} \right)} \right)^2 } } \right).
		\end{align}
		Let us simplify that, indeed we have
		
		\begin{align*}
			P_{11} =\left[
			\begin{array}{c|c}
				\begin{array}{*{20}c}
					{ - {\mathop{\rm Re}\nolimits} \left( {a_{2n} } \right)} \\
				\end{array}  & 
				\begin{array}{*{20}c}
					{\frac{{ - a_{2n - 1}  + 1}}{2}} &  \cdots  & {\frac{{ - a_{n + 2} }}{2}} & {\frac{{ - a_{n + 1} }}{2}} 
					\\
				\end{array} 
				\\\hline
				\begin{array}{*{20}c}
					{\frac{{ - \overline {a_{2n - 1} }  + 1}}{2}}
					\\
					\vdots  
					\\
					{\frac{{ - \overline {a_{n + 2} } }}{2}}
					\\
					{\frac{{ - \overline {a_{n + 1} } }}{2}}  \\
				\end{array} & 
				\begin{array}{*{20}c}
					0 & {\frac{1}{2}} & 0  &  \cdots  & 0  
					\\
					{\frac{1}{2}} & 0 & {\frac{1}{2}} & 0 &  \vdots   
					\\
					0 & {\frac{1}{2}} &  \ddots  &  \ddots  & 0  
					\\
					\vdots  &  \ddots  &  \ddots  &  \ddots  & {\frac{1}{2}}  
					\\
					0  &  \cdots  & 0 & {\frac{1}{2}} & 0  \\
				\end{array}
			\end{array}
			\right]
		\end{align*}
		which is can be written as 
		\begin{align*}
			P_{11}= 
			\left[ {\begin{array}{*{20}c}
					{ - {\mathop{\rm Re}\nolimits} \left( {a_{2n} } \right)}  & u^*  \\
					u  & 	T_{n-1}  \\
			\end{array}} \right]
		\end{align*}
		where $x:=\left[{\frac{{ - \overline {a_{2n - 1} }  + 1}}{2}}
		,
		\cdots  
		,
		{\frac{{ - \overline {a_{n + 2} } }}{2}}
		,
		{\frac{{ - \overline {a_{n + 1} } }}{2}}\right] $. Thus, be employing Lemmas \ref{lem2} \& \ref{lem3},  we have
		 \begin{align}
			\label{eq4.3}\omega\left(P_{11}\right)
			&=\omega\left(
			\left[ {\begin{array}{*{20}c}
					{\left| {{\mathop{\rm Re}\nolimits} \left( {a_{2n} } \right) } \right|} & {x^* }  \\
					x & T_{n-1}    \\
			\end{array}} \right]\right)
			\\
			&\le \omega\left(
			\left[ {\begin{array}{*{20}c}
					{\omega\left(\left| {{\mathop{\rm Re}\nolimits} \left( {a_{2n} } \right) } \right|\right)} & {\left\|x\right\| }  \\
					\left\|x\right\| & \omega\left( T_{n-1}\right) \\
			\end{array}} \right]\right) 
			\nonumber\\
			&=r\left(
			\left[ {\begin{array}{*{20}c}
					{\omega\left(\left| {{\mathop{\rm Re}\nolimits} \left( {a_{2n} } \right) } \right|\right)} & {\left\|x\right\| }  \\
					\left\|x\right\| & \omega\left( T_{n-1}\right) \\
			\end{array}} \right]\right) 
			\nonumber	\\
			&= \frac{1}{2} \left(\left| {{\mathop{\rm Re}\nolimits} \left( {a_{2n} } \right) } \right|+\cos\left(\frac{\pi}{n}\right) +\sqrt{ \left(\left| {{\mathop{\rm Re}\nolimits} \left( {a_{2n} } \right) } \right|-\cos\left(\frac{\pi}{n}\right)\right) ^2+ 4
				\left\|x\right\|^2}\right)
			\nonumber\\
			&= \frac{1}{2} \left(\left| {{\mathop{\rm Re}\nolimits} \left( {a_{2n} } \right) } \right|+\cos\left(\frac{\pi}{n}\right) +\sqrt{ \left(\left| {{\mathop{\rm Re}\nolimits} \left( {a_{2n} } \right) } \right|-\cos\left(\frac{\pi}{n}\right)\right) ^2+ 
				\left| {1 - a_{2n - 1} } \right|^2 + \sum\limits_{k = n + 1}^{2n - 2} {\left| {a_k } \right|^2 }	 }\right).\nonumber
		\end{align}
		On the other hand, we have
		\begin{align*}
			P_{12}+P_{21}&=\left[\begin{array}{*{20}c}
				{\frac{{ - a_n }}{2}} & {\frac{{ - a_{n - 1} }}{2}} &  \cdots  & {\frac{{ - a_2 }}{2}} & {\frac{{ - a_1 }}{2}}  
				\\
				0 & 0 &  \cdots  & 0 & 0 
				\\
				0 & 0 &  \cdots  & 0 & 0 
				\\
				\vdots & \vdots &  \vdots  & \vdots &\vdots 
				\\
				{\frac{1}{2}} & 0 &  \cdots  & 0 & 0  
				\\
			\end{array}\right]+ 
			\left[\begin{array}{*{20}c}
				{\frac{{ - \overline {a_n } }}{2}} & 0 &  \cdots  &  0 & {\frac{1}{2}} 
				\\
				{\frac{{ - \overline {a_{n - 1} } }}{2}} & 0 &  \cdots  &  \cdots  & 0 
				\\
				\vdots  & 0 &  \ddots  &  \cdots  & 0
				\\
				{\frac{{ - \overline {a_2 } }}{2}} &  \vdots  &  \cdots  &  \cdots  &  \vdots  
				\\
				{\frac{{ - \overline {a_1 } }}{2}} & 0 &  \cdots  & 0 & 0  \\
			\end{array}\right]
			\\
			&=\left[
			\begin{array}{c|c}
				\begin{array}{*{20}c}
					{-\frac{{ a_n+\overline {a_n } }}{2}} \\
				\end{array}  & 
				\begin{array}{*{20}c}
					{-\frac{{   a_{n - 1} }}{2 }} &  \cdots  & {-\frac{{   a_2 }}{2 }} & {-\frac{{   a_1 }}{2 }+\frac{1}{2}}  
					\\
				\end{array} 
				\\\hline
				\begin{array}{*{20}c}
					{-\frac{{  \overline {a_{n - 1} } }}{2 }}  
					\\
					\vdots  
					\\
					{-\frac{{  \overline {a_2 } }}{2 }}  
					\\
					{-\frac{{  \overline {a_1 } }}{2 }+\frac{1}{2}}  \\
				\end{array} & 
				\begin{array}{*{20}c}
					0 &    0&\cdots  & 0 
					\\
					\vdots  & \ddots &      \cdots  & 0
					\\
					0 &  \cdots  &  \cdots  &  \vdots   
					\\
					0 & 0 &  \cdots  & 0   \\
				\end{array} 
			\end{array}
			\right]_{n\times n}.
		\end{align*}
		Let $v:=\left[
		{\frac{{ - \overline {a_{ n - 1} }   }}{2}},     \cdots  , {\frac{{ - \overline {a_{  2} } }}{2}} , {\frac{{ - \overline {a_{ 1}  } +1}}{2}} 
		\right]^T$.
		So that, by Lemmas \ref{lem2} \& \ref{lem3},  we have 
		\begin{align}
			\label{eq4.4}\omega\left(P_{12}+P_{21}\right)
			&=\omega\left(
			\left[ {\begin{array}{*{20}c}
					{\left| {{\mathop{\rm Re}\nolimits} \left( {a_{n} } \right) } \right|} & {v^* }  \\
					v & 0  \\
			\end{array}} \right]\right)
			\\
			&\le \omega\left(
			\left[ {\begin{array}{*{20}c}
					{w\left(\left| {{\mathop{\rm Re}\nolimits} \left( {a_{n} } \right) } \right|\right)} & {\left\|v\right\| }  \\
					\left\|v\right\| & 0  \\
			\end{array}} \right]\right) 
			\nonumber\\
			&=r\left(
			\left[ {\begin{array}{*{20}c}
					{w\left(\left| {{\mathop{\rm Re}\nolimits} \left( {a_{n} } \right) } \right|\right)} & {\left\|y\right\| }  \\
					\left\|y\right\| & 0  \\
			\end{array}} \right]\right) 
			\nonumber	\\
			&= \frac{1}{2} \left(\left| {{\mathop{\rm Re}\nolimits} \left( {a_{n} } \right) } \right| +\sqrt{ \left| {{\mathop{\rm Re}\nolimits} \left( {a_{n} } \right) } \right| ^2+4
				\left\|v\right\|^2}\right)
			\nonumber\\
			&= \frac{1}{2} \left(\left| {{\mathop{\rm Re}\nolimits} \left( {a_{n} } \right) } \right| +\sqrt{ \left| {{\mathop{\rm Re}\nolimits} \left( {a_{n} } \right) } \right| ^2+  	\left| {1 - a_{ 1}   } \right|^2 +   \sum\limits_{k = 2}^{n-1} {\left| {a_k } \right|^2 }    }\right).\nonumber
		\end{align}
		
		Similarly, we have
		\begin{align*}
			P_{12}-P_{21}=\left[
			\begin{array}{c|c}
				\begin{array}{*{20}c}
					{-\frac{{ a_n-\overline {a_n } }}{2}} \\
				\end{array}  & 
				\begin{array}{*{20}c}
					{-\frac{{   a_{n - 1} }}{2 }} &  \cdots  & {-\frac{{   a_2 }}{2 }} & {-\frac{{   a_1 }}{2 }-\frac{1}{2}}  
					\\
				\end{array} 
				\\\hline
				\begin{array}{*{20}c}
					{\frac{{  \overline {a_{n - 1} } }}{2 }}  
					\\
					\vdots  
					\\
					{\frac{{  \overline {a_2 } }}{2 }}  
					\\
					{\frac{{  \overline {a_1 } }}{2 }+\frac{1}{2}}  \\
				\end{array} & 
				\begin{array}{*{20}c}
					0 &    0&\cdots  & 0 
					\\
					\vdots  & \ddots &      \cdots  & 0
					\\
					0 &  \cdots  &  \cdots  &  \vdots   
					\\
					0 & 0 &  \cdots  & 0   \\
				\end{array} 
			\end{array}
			\right]_{n\times n}.
		\end{align*}
		Let $u:=\left[
		{\frac{{  \overline {a_{ n - 1} }   }}{2}},     \cdots  , {\frac{{   \overline {a_{  2} } }}{2}} , {\frac{{   \overline {a_{ 1}  } +1}}{2}} 
		\right]^T$.
		So that, by Lemmas \ref{lem2} \& \ref{lem3}, we have 
		\begin{align}
			\label{eq4.5}	\omega\left(P_{12}-P_{21}\right)
			&=\omega\left(
			\left[ {\begin{array}{*{20}c}
					{\left| {-i{\mathop{\rm Im}\nolimits} \left( {a_{n} } \right) } \right|} & {u^* }  \\
					u & 0  \\
			\end{array}} \right]\right)
			\\
			&\le \omega\left(
			\left[ {\begin{array}{*{20}c}
					{w\left(\left| {{\mathop{\rm Im}\nolimits} \left( {a_{n} } \right) } \right|\right)} & {\left\|u\right\| }  \\
					\left\|u\right\| & 0  \\
			\end{array}} \right]\right) 
			\nonumber\\
			&=r\left(
			\left[ {\begin{array}{*{20}c}
					{w\left(\left| {{\mathop{\rm Im}\nolimits} \left( {a_{n} } \right) } \right|\right)} & {\left\|u\right\| }  \\
					\left\|u\right\| & 0 \\
			\end{array}} \right]\right) 
			\nonumber	\\
			&= \frac{1}{2} \left(\left| {{\mathop{\rm Im}\nolimits} \left( {a_{n} } \right) } \right| +\sqrt{ \left| {{\mathop{\rm Im}\nolimits} \left( {a_{n} } \right) } \right| ^2+4
				\left\|u\right\|^2}\right)
			\nonumber\\
			&= \frac{1}{2} \left(\left| {{\mathop{\rm Im}\nolimits} \left( {a_{n} } \right) } \right| +\sqrt{ \left| {{\mathop{\rm Im}\nolimits} \left( {a_{n} } \right) } \right| ^2+  	\left| {1 + a_{ 1}   } \right|^2 +   \sum\limits_{k = 2}^{n-1} {\left| {a_k } \right|^2 }    }\right).\nonumber
		\end{align}
		Also, by  \eqref{eq1.1} and Lemma \ref{lem1}, we have
		\begin{align}
			\label{eq4.6}\omega\left(P_{22}\right)=\cos\left(\frac{\pi}{n+1}\right).
		\end{align}
		Combining all above inequalities and equalities \eqref{eq4.3}--\eqref{eq4.6} in \eqref{eq4.2} we get the required result in Theorem \ref{thm3}. Following the same steps for $\omega \left( {\rm{Re}} \left[C\left(q\right) \right]\right)$, therefore the proof of Theorem \ref{thm3} is established.
	\end{proof}
	
	The following example illustrated in  Table \ref{tab2}, shows that our estimated rectangle given in Theorem \ref{thm3} might be better than that one given in Theorem \ref{thm2}.
	\begin{example}
		Consider $q\left(z\right)=z^6+2iz^5+4i z^4+\frac{1}{4}z+ \frac{1}{16}$, then the real and  imaginary parts of the zeros of $q$,  are bounded as obtained in  Table \ref{tab2}.
		
		\begin{table}
			\caption{\label{Table 2} \label{tab2}} 
			\begin{tabular}{ c c c}
				\hline
				{\bf Result}  \qquad &  \qquad {\bf Upper bound of $	\left| {{\mathop{\rm Re}\nolimits} \left( \lambda  \right)} \right|$}  \qquad &  \qquad {\bf Upper bound of $	\left| {{\mathop{\rm Im}\nolimits} \left( \lambda  \right)} \right|$}
				\\\hline
				Kittaneh \cite{FK2} \qquad &\qquad  $3.999737494$ \qquad &  \qquad$3.576384821$ \\  
				Theorem \ref{thm3}    \qquad & \qquad  $2.476786336$ \qquad &  \qquad $2.585204772$  \\  
				\hline
			\end{tabular}
		\end{table}
		
	\end{example}
	
	\begin{remark}
		\label{rem1}Several particular cases of Theorem \ref{thm3} which are of great interest could be deduced. Among others, we note the following cases:
		\begin{itemize}
			\item $a_{2n}=a_{n}=0$, $a_{2n-1}=\pm 1$ and $a_1=1$.
			
			\item $a_{2n}=a_{n}=0$, $a_{2n-1}=\pm 1$ and $a_k=0$ for all $k=2,3,\cdots, n-1$. In particular, take $a_1=1$.
			
			\item $a_{2n}=a_{n}=0$, $a_{2n-1}=\pm 1$ and $a_k=0$ for all $k=n+1,n+2,\cdots, 2n-2$. In particular, take $a_1=1$.
		\end{itemize}
		
	\end{remark}

	\begin{theorem}
		\label{thm4}Under the assumption of Theorem \ref{thm3}. Then
		\begin{align}
			\label{eq4.7}	\omega \left( C\left(q\right) \right) \le \frac{1}{2}\left( {L+\cos \left( \frac{\pi}{n+1} \right)+ \sqrt {\left( {L -\cos \left( \frac{\pi}{n+1} \right)} \right)^2  + \left( {D_1+D_2} \right)^2 } } \right).
		\end{align}
		where
		\begin{align*}
			L:= \frac{1}{2} \left(\sqrt {\sum\limits_{k = n + 2}^{2n} {\left| {a_k } \right|^2 } }  +\sqrt{ \sum\limits_{k = n + 2}^{2n} {\left| {a_k } \right|^2 }  + 
				\left(\left|a_{n+1}\right| +1\right)^2 }\right),
		\end{align*}	
		\begin{align*}
			D_1	=   \frac{1}{2} \left(\left| {a_{n} } \right| +\sqrt{ \left| {a_{n} } \right| ^2+ \left| {1-a_{1} } \right|^2 +
				\sum\limits_{k = 2}^{n-1} {\left| {a_k } \right|^2 }}\right),
		\end{align*}
		and
		\begin{align*}
			D_2	=   \frac{1}{2} \left(\left| {a_{n} } \right| +\sqrt{ \left| {a_{n} } \right| ^2+ \left| {1+a_{1} } \right|^2 +
				\sum\limits_{k = 2}^{n-1} {\left| {a_k } \right|^2 }}\right),
		\end{align*}
	\end{theorem}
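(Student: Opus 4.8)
The plan is to feed the block matrix $C(q)=[A_{ij}]$ of \eqref{eq3.13} directly into Lemma \ref{lemma5}, in contrast with the proof of Theorem \ref{thm3}, where Lemma \ref{lemma5} was applied to $\mathrm{Re}(C(q))$. With $A=A_{11}$, $B=A_{12}$, $C=A_{21}$, $D=A_{22}$, Lemma \ref{lemma5} gives
\[
\omega(C(q))\le \tfrac12\left(\omega(A_{11})+\omega(A_{22})+\sqrt{\left(\omega(A_{11})-\omega(A_{22})\right)^2+\left(\omega(A_{12}+A_{21})+\omega(A_{12}-A_{21})\right)^2}\right),
\]
so the theorem reduces to producing the four estimates $\omega(A_{11})\le L$, $\omega(A_{22})=\cos(\pi/(n+1))$, $\omega(A_{12}+A_{21})=D_1$, $\omega(A_{12}-A_{21})=D_2$, and then inserting them into the right-hand side.

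Three of these are exact and immediate. The matrix $A_{12}$ is zero outside its first row and $A_{21}$ has a single nonzero entry (a $1$ in its top-right corner), so $A_{12}\pm A_{21}$ is again supported on its first row; any such matrix with first row $\rho=(\rho_1,\dots,\rho_n)$ is rank one with numerical radius $\tfrac12(\|\rho\|_2+|\rho_1|)$. For $A_{12}+A_{21}$ the row is $(-a_n,\dots,-a_2,\,1-a_1)$ and for $A_{12}-A_{21}$ it is $(-a_n,\dots,-a_2,\,-1-a_1)$; reading off the formula gives exactly $D_1$ and $D_2$, the terms $|1\mp a_1|$ arising from the corner of $A_{21}$ interacting with the last coefficient of $A_{12}$. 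For $A_{22}$, which is the $n\times n$ nilpotent shift, I use that $\mathrm{Re}(A_{22})=P_{22}=\tri(\tfrac12,0,\tfrac12)$ is normal (Lemma \ref{lem1}) and that $e^{i\theta}A_{22}$ is unitarily equivalent to $A_{22}$; hence $\omega(A_{22})$ equals the spectral radius of $P_{22}$, which by \eqref{eq1.1} is $\cos(\pi/(n+1))$, exactly as in \eqref{eq4.6}.

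The substantive step is $\omega(A_{11})\le L$. I would split off the last row and column of the companion block, writing $A_{11}=\left[\begin{smallmatrix}A' & b'\\ (c')^{*} & 0\end{smallmatrix}\right]$, where $A'$ is the $(n-1)\times(n-1)$ leading companion submatrix of $z^{n-1}+a_{2n}z^{n-2}+\cdots+a_{n+2}$, $b'=-a_{n+1}e_1$ and $c'=e_{n-1}$. Applying Lemma \ref{lem2} together with Lemma \ref{lem3} to this $2\times2$ partition gives $\omega(A_{11})\le\tfrac12(\omega(A')+\sqrt{\omega(A')^2+(\|b'\|+\|c'\|)^2})$; since $\|b'\|=|a_{n+1}|$ and $\|c'\|=1$, the bracket is precisely $(|a_{n+1}|+1)^2$. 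It then remains to bound $\omega(A')\le\sqrt{\sum_{k=n+2}^{2n}|a_k|^2}$, after which monotonicity of $t\mapsto\tfrac12(t+\sqrt{t^2+(|a_{n+1}|+1)^2})$ delivers $L$.

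I expect this last estimate to be the main obstacle. The block $A'$ is itself a companion matrix, carrying the higher coefficients $a_{n+2},\dots,a_{2n}$ on top of an $(n-2)$-fold nilpotent shift, so controlling $\omega(A')$ by the bare coefficient norm $\sqrt{\sum_{k=n+2}^{2n}|a_k|^2}$ is exactly the point where the shift contribution must be absorbed; this is the delicate inequality on which the closed form $L$ rests. Once it is in hand, the assembly is routine: the function $f(a,d,b)=\tfrac12(a+d+\sqrt{(a-d)^2+b^2})$ is nondecreasing in each of $a,d$ (as $\partial_a f=\tfrac12(1+\tfrac{a-d}{\sqrt{(a-d)^2+b^2}})\ge0$, and $f$ is symmetric in $a,d$) and in $b\ge0$, so substituting $\omega(A_{11})\le L$, $\omega(A_{22})=\cos(\pi/(n+1))$ and $\omega(A_{12}+A_{21})+\omega(A_{12}-A_{21})=D_1+D_2$ into the displayed Lemma \ref{lemma5} bound upgrades it to $f(L,\cos(\pi/(n+1)),D_1+D_2)$, which is exactly \eqref{eq4.7}.
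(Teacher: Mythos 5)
Your skeleton is the same as the paper's: both arguments start from Lemma \ref{lemma5} applied to the partition \eqref{eq3.13} with $A=A_{11}$, $B=A_{12}$, $C=A_{21}$, $D=A_{22}$, and then control the four quantities separately. Three of your four ingredients are correct, and in fact sharper than the paper's: the rank-one formula $\omega(uv^*)=\tfrac12\left(|\langle u,v\rangle|+\|u\|\,\|v\|\right)$ gives the exact equalities $\omega(A_{12}\pm A_{21})=D_1,\,D_2$ (the paper only derives these as upper bounds via Lemma \ref{lem3}), and your unitary-equivalence argument for $\omega(A_{22})=\cos\left(\frac{\pi}{n+1}\right)$ is sound. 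The proof fails, however, at exactly the step you flagged as the main obstacle, and the failure is fatal: the inequality $\omega(A')\le\sqrt{\sum_{k=n+2}^{2n}|a_k|^2}$ that your reduction requires is not merely unproven, it is false. The block $A'$ contains the nilpotent shift of size $n-1$, so when $a_{n+2}=\cdots=a_{2n}=0$ one has $\omega(A')=\cos\left(\frac{\pi}{n}\right)>0$ while the right-hand side is $0$; no absorption argument can rescue an inequality whose right-hand side vanishes while its left-hand side does not. Your Lemma \ref{lem2} reduction itself is legitimate (square diagonal blocks, $\omega(0)=0$), but with the true value of $\omega(A')\ge\cos\left(\frac{\pi}{n}\right)$ it can only produce $\tfrac12\left(\omega(A')+\sqrt{\omega(A')^2+(|a_{n+1}|+1)^2}\right)$, which strictly exceeds $L$ whenever the coefficients $a_{n+2},\dots,a_{2n}$ are small; it can never yield $L$.

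For comparison, the paper proves this step, inequality \eqref{eq4.9}, by a different device: it partitions $A_{11}$ skewly (first row against the remaining rows, but first $n-1$ columns against the last column), so that the subdiagonal of ones becomes an identity block of norm $1$, and then applies the Hou--Du/Lemma \ref{lem3} bound $\omega\left(\left[A_{ij}\right]\right)\le\omega\left(\left[\|A_{ij}\|\right]\right)$ to the resulting $2\times2$ array of norms. That application is itself invalid: such bounds hold for partitions in which the row and column decompositions coincide (square diagonal blocks), and for the skew partition the very same example refutes the conclusion --- if $a_{n+1}=\cdots=a_{2n}=0$ then $A_{11}$ is the $n\times n$ shift, so $\omega(A_{11})=\cos\left(\frac{\pi}{n+1}\right)>\tfrac12=L$ for every $n\ge3$, contradicting \eqref{eq4.9}. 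The paper effectively concedes this in its closing section, where the same quantity (there called \textbf{MW}, inequality \eqref{eq4.13}) is shown by the polynomial $h_1$ to fail as an upper bound and is then patched with ad hoc hypotheses on the coefficients. So your instinct was exactly right: the closed form $L$ is the crux, but the gap cannot be filled --- $L$ is not a valid upper bound for $\omega(A_{11})$ in general, neither your route nor the paper's establishes \eqref{eq4.9}, and consequently the status of \eqref{eq4.7} itself is left unresolved by both arguments.
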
	
	
	\begin{proof}
		Applying \eqref{eq4.2} to $C\left(q\right)$ given in \eqref{eq3.13} by setting $A=A_{11}$, $B=A_{12}$, $C=A_{21}$ and $D=A_{22}$. So that, we get
		
		\begin{align}
			\label{eq4.8}\omega \left( C\left(q\right) \right) \le \frac{1}{2}\left( {\omega \left( A_{11} \right) + \omega \left( A_{22} \right) + \sqrt {\left( {\omega \left( A_{11} \right) - \omega \left( A_{22} \right)} \right)^2  + \left( {\omega \left( {A_{12} + A_{21}} \right) + \omega \left( {A_{12} - A_{21}} \right)} \right)^2 } } \right).
		\end{align}
		Let us observe that
		\begin{align*}
			A_{11}=\left[
			\begin{array}{c|c}
				\begin{array}{*{20}c}
					{-a_{2n}} &  	{-a_{2n-1}} &  \cdots  & {-a_{n+2}}\\
				\end{array} &
				\begin{array}{*{20}c}
					{-a_{n+1}}  
					\\
				\end{array} 
				\\\hline
				\begin{array}{*{20}c}
					1 &    0&\cdots  & 0 
					\\
					\vdots  & \ddots &      \cdots  & 0
					\\
					0 &  \cdots  &  \ddots  &  \vdots   
					\\
					0 & 0 &  \cdots  & 1   \\
				\end{array}  & 
				\begin{array}{*{20}c}
					0  
					\\
					\vdots  
					\\
					0    
					\\
					0 \\
				\end{array} 
			\end{array}
			\right]_{n\times n}.
		\end{align*}

		Let $d:=\left[	{-a_{2n}}    \cdots    {-a_{n+2}}  
		\right]$.
		So that, by Lemmas \ref{lem2} \& \ref{lem3}, we have 
		\begin{align}
			\label{eq4.9}	\omega\left(A_{11} \right)
			&=\omega\left(
			\left[ {\begin{array}{*{20}c}
					{ d  } & {-a_{n+1} }  \\
					I & 0  \\
			\end{array}} \right]\right)
			\nonumber\\
			&\le \omega\left(
			\left[ {\begin{array}{*{20}c}
					{  \left\| {  d  } \right\| } & {\left|a_{n+1}\right| }  \\
					\left\|I\right\| & \|0\|  \\
			\end{array}} \right]\right) 
			\nonumber\\
			&=r\left(
			\left[ {\begin{array}{*{20}c}
					{\|d\|} & \frac{\left|a_{n+1}\right| +1}{2}   \\
					\frac{\left|a_{n+1}\right| +1}{2} & 0 \\
			\end{array}} \right]\right) 
			\nonumber	\\
			&= \frac{1}{2} \left(\sqrt {\sum\limits_{k = n + 2}^{2n} {\left| {a_k } \right|^2 } }  +\sqrt{ \sum\limits_{k = n + 2}^{2n} {\left| {a_k } \right|^2 }  + 
				\left(\left|a_{n+1}\right| +1\right)^2 }\right).
		\end{align}
		Now, let $b:=\left[	{-a_{n-1} ,  \cdots  , {-a_{2}}, {1-a_{1}} } \right] $.
		So that, by Lemma \ref{lem3}, we have 
		\begin{align*}
			A_{12}+	A_{21}=\left[
			\begin{array}{c|c}
				\begin{array}{*{20}c}
					{-a_{n}} \\
				\end{array}  & 
				\begin{array}{*{20}c}
					{-a_{n-1}} &  \cdots  & {-a_{2}} & {1-a_{1}}  
					\\
				\end{array} 
				\\\hline
				\begin{array}{*{20}c}
					0  
					\\
					\vdots  
					\\
					0    
					\\
					0 \\
				\end{array} &
				\begin{array}{*{20}c}
					0 &    0&\cdots  & 0 
					\\
					\vdots  & \ddots &      \cdots  & 0
					\\
					0 &  \cdots  &  \ddots  &  \vdots   
					\\
					0 & 0 &  \cdots  & 0   \\
				\end{array}  
			\end{array}
			\right]_{n\times n},
		\end{align*}
		and so
		\begin{align}
			\omega\left(A_{12}+A_{21} \right)
			=\omega\left(
			\left[ {\begin{array}{*{20}c}
					{\left| {- a_{n}   } \right|} & {b  }  \\
					0 & 0  \\
			\end{array}} \right]\right)
			&\le \omega\left(
			\left[ {\begin{array}{*{20}c}
					{w\left(\left| {   a_{n} } \right|\right)} & {\left\|b\right\| }  \\
					0 & 0  \\
			\end{array}} \right]\right) 
			\nonumber\\
			&=r\left(
			\left[ {\begin{array}{*{20}c}
					{w\left(\left| {   a_{n} } \right|\right)} & \frac{1}{2}\left\|b\right\|  \\
					\frac{1}{2}\left\|b\right\| & 0 \\
			\end{array}} \right]\right) 
			\nonumber	\\
			&= \frac{1}{2} \left(\left| {a_{n} } \right| +\sqrt{ \left| {a_{n} } \right| ^2+ 
				\left\|b\right\|^2 }\right)
			\nonumber\\
			&=  \frac{1}{2} \left(\left| {a_{n} } \right| +\sqrt{ \left| {a_{n} } \right| ^2+ \left| {1-a_{1} } \right|^2 +
				\sum\limits_{k = 2}^{n-1} {\left| {a_k } \right|^2 }}\right). \label{eq4.10}
		\end{align}
		
		Similarly, let $z:=\left[	{a_{n-1} ,  \cdots  , {a_{2}}, {1+a_{1}} } \right] $.
		So that, by Lemma \ref{lem3}, we have 
		\begin{align*}
			A_{12}-	A_{21}=\left[
			\begin{array}{c|c}
				\begin{array}{*{20}c}
					{ a_{n}} \\
				\end{array}  & 
				\begin{array}{*{20}c}
					{ a_{n-1}} &  \cdots  & { a_{2}} & {1+a_{1}}  
					\\
				\end{array} 
				\\\hline
				\begin{array}{*{20}c}
					0  
					\\
					\vdots  
					\\
					0    
					\\
					0 \\
				\end{array} &  \begin{array}{*{20}c}
					0 &    0&\cdots  & 0 
					\\
					\vdots  & \ddots &      \cdots  & 0
					\\
					0 &  \cdots  &  \ddots  &  \vdots   
					\\
					0 & 0 &  \cdots  & 0   \\
				\end{array} 
			\end{array}
			\right]_{n\times n}.
		\end{align*}
		So that
		\begin{align}
			\omega\left(A_{12}-A_{21} \right)
			=\omega\left(
			\left[ {\begin{array}{*{20}c}
					{\left| {  a_{n}   } \right|} & {b  }  \\
					0 & 0  \\
			\end{array}} \right]\right)
			&\le \omega\left(
			\left[ {\begin{array}{*{20}c}
					{w\left(\left| {   a_{n} } \right|\right)} & {\left\|z\right\| }  \\
					0 & 0  \\
			\end{array}} \right]\right) 
			\nonumber\\
			&=r\left(
			\left[ {\begin{array}{*{20}c}
					{w\left(\left| {   a_{n} } \right|\right)} & \frac{1}{2}\left\|z\right\|  \\
					\frac{1}{2} \left\|z\right\| & 0 \\
			\end{array}} \right]\right) 
			\nonumber	\\
			&= \frac{1}{2} \left(\left| {a_{n} } \right| +\sqrt{ \left| {a_{n} } \right| ^2+ 
				\left\|z\right\|^2 }\right)
			\nonumber\\
			&=  \frac{1}{2} \left(\left| {a_{n} } \right| +\sqrt{ \left| {a_{n} } \right| ^2+ \left| {1+a_{1} } \right|^2 +
				\sum\limits_{k = 2}^{n-1} {\left| {a_k } \right|^2 }}\right) \label{eq4.11}
		\end{align}
		Also, since $	\omega\left(A_{12}-A_{21} \right)=  \cos \left( {\frac{\pi }{{n + 1}}} \right)$, then  by substituting \eqref{eq4.9}--\eqref{eq4.11} in \eqref{eq4.8} we get the required result in \eqref{eq4.7}. 
	\end{proof}

	The following example illustrates that our upper bound given in Theorem \ref{thm4} is better than some famous and recent upper bounds obtained in the literature. Any zero of $q\left(z\right)=z^6+\frac{1}{2}z^5+\frac{1}{16}z^2+1$, is bounded by any values given in  Table \ref{tab3} and shows that our presented results could be much better than all compared upper bounds listed in Table \ref{tab2}.\\
	
	\begin{table}
			\caption{\label{Table 3}\label{tab3} } 
		\begin{tabular}{ c c }
			\hline
			{\bf Mathematician}  \qquad &  \qquad {\bf Upper bound} \\\hline
			Cauchy \eqref{eq3.3}\qquad & \qquad $2$  \\    
			Carmichael and Mason \eqref{eq3.4} \qquad &  \qquad $1.501301519$ \\
			Montel \eqref{eq3.5}\qquad & \qquad $1.5625$  \\
			Fujii and Kubo \eqref{eq3.6}\qquad & \qquad $1.777921993$  \\  
			Abdurakhmanov \eqref{eq3.7} \qquad &\qquad$1.701542875$\\
			Linden \eqref{eq3.8}\qquad &\qquad$2.350962955$\\
			Kittaneh \eqref{eq3.9}\qquad &\qquad $1.455651176$\\
			Abu-omar and Kittaneh \eqref{eq3.10} \qquad &\qquad $1.857439836$\\
			Al-Dolat \etal \eqref{eq3.11} \qquad & \qquad $2.147748325$  \\
			\bf{Theorem \ref{thm4}}    \qquad & \qquad $\bf{1.307548659}$  \\
			\hline
			\end{tabular}
	\end{table}

	\begin{corollary}
		\label{cor3}	Under the assumptions of Theorem \ref{thm4}. If $a_k=0$ for all $k=2,3,\cdots, n$ and $a_1=1$ (or $a_1=-1$), then we have
		
		\begin{align}
			\label{eq4.12}	\omega \left( C\left(q\right) \right) \le \frac{1}{2}\left( {L + \cos\left(\frac{\pi}{n+1}\right) + \sqrt {\left( {L -\cos\left(\frac{\pi}{n+1}\right)} \right)^2  +1 } } \right).
		\end{align}
		where $L$ is defined in Theorem \ref{thm4}.
	\end{corollary}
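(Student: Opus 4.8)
The plan is to read off Corollary \ref{cor3} as the direct specialization of Theorem \ref{thm4} to the case $a_2 = a_3 = \cdots = a_n = 0$ together with $a_1 = \pm 1$; no new estimate is needed, only a simplification of the quantities $L$, $D_1$ and $D_2$ that enter the bound \eqref{eq4.7}. First I would observe that the hypothesis leaves the coefficients $a_{n+1}, a_{n+2}, \ldots, a_{2n}$ entirely unconstrained, and since $L$ depends only on these, the quantity $L$ is carried over unchanged from \eqref{eq4.7}; consequently both the leading $L$ term and the $(L - \cos(\pi/(n+1)))^2$ term inside the square root survive verbatim.

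Next I would simplify $D_1 + D_2$. Because $a_n = 0$ and $\sum_{k=2}^{n-1}\left|a_k\right|^2 = 0$ under the hypothesis, the square-root terms defining $D_1$ and $D_2$ collapse and the definitions reduce to
\begin{align*}
D_1 = \tfrac{1}{2}\left|1 - a_1\right|, \qquad D_2 = \tfrac{1}{2}\left|1 + a_1\right|.
\end{align*}
Inserting $a_1 = 1$ gives $D_1 = 0$ and $D_2 = 1$, while $a_1 = -1$ gives $D_1 = 1$ and $D_2 = 0$; in either case $D_1 + D_2 = 1$, so that $\left(D_1 + D_2\right)^2 = 1$. Substituting $L$ unchanged and $\left(D_1 + D_2\right)^2 = 1$ into the right-hand side of \eqref{eq4.7} then produces exactly the asserted bound \eqref{eq4.12}.

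Since every step is an elementary substitution into an already-proved inequality, there is no genuine obstacle. The only point requiring a moment's care is to confirm that the radicands defining $D_1$ and $D_2$ really do reduce to $\left|1 \mp a_1\right|^2$ once all the intermediate coefficients vanish, and to verify that the two admissible choices $a_1 = \pm 1$ yield the same common value $D_1 + D_2 = 1$ rather than two distinct bounds, which is precisely what makes the single clean statement \eqref{eq4.12} legitimate for both signs.
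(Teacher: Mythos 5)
Your proof is correct and is essentially the paper's own argument: the paper specializes the intermediate inequality \eqref{eq4.8}, noting that under the hypothesis $\omega\left(A_{12}+A_{21}\right)=0$ and $\omega\left(A_{12}-A_{21}\right)=1$ when $a_1=1$ (values swapped when $a_1=-1$) together with $\omega\left(A_{22}\right)=\cos\left(\frac{\pi}{n+1}\right)$, which are exactly the values your reduced $D_1$ and $D_2$ take, so the two substitutions produce the identical bound \eqref{eq4.12}. No gap; nothing further is needed.
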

	
	\begin{proof}
		From \eqref{eq4.2} we have $\omega \left( {A_{12} + A_{21}} \right)=0$ and $\omega \left( {A_{12} - A_{21}} \right)=1$, if $a_1=1$. Thus, we have 
		\begin{align*}
			\left( {\omega \left( {A_{12} + A_{21}} \right) + \omega \left( {A_{12} - A_{21}} \right)} \right)^2=1
		\end{align*}
		Also, we always have $\omega \left( A_{22} \right)=\cos\left(\frac{\pi}{n+1}\right)$. Employing \eqref{eq4.2} we get the desired result in \eqref{eq4.12}.
	\end{proof}

	\begin{remark}
		It is convenient to note that \eqref{eq4.7} can be rewritten as	
		\begin{align*}
			\omega \left( C\left(q\right) \right) \le \frac{1}{2}\left( {\omega\left(A_{11}\right)+\cos \left( \frac{\pi}{n+1} \right)+ \sqrt {\left( {\omega\left(A_{11}\right) -\cos \left( \frac{\pi}{n+1} \right)} \right)^2  + \left( {D_1+D_2} \right)^2 } } \right).
		\end{align*}	
		Moreover, 	if $a_{n+1}\ne0$ in Theorem \ref{thm4}, one can replace the upper bound of $\omega\left(A_{11}\right)$ by any other upper bound established in literature.   Indeed, it could be chosen as minimum as possible, and this improves  our result in \eqref{eq4.7}.  
	\end{remark}

	We end this work by giving a new upper bound for the numerical radius of the   companion matrix that represents the polynomial $g\left(z\right)=z^n+c_{n}z^{n-1}+c_{n-1}z^{n-2}+\cdots+ c_2z+c_1$ (with $c_1\ne0$), of any degree $\ge2$. 
	
	Our upper bound is exactly the number $L$ defined in Theorem \ref{thm4}. Naming, $c_n=a_{2n}, c_{n-1}=a_{2n-1}, \cdots, c_{2}=a_{n+2}, c_1=a_{n+1}$. If $c _{k}$'s are all reals such that $|c _{k}|\mathop  < \limits_ {\ne} 1$  and $|c_{k+1}|
	\mathop  > \limits_ {\ne}  |c _{k}|$ $(\forall k=1,2,\cdots,n-1)$. Then, we have
	\begin{align}
		\label{eq4.13}\omega\left(	C\left( g \right)\right)\le \frac{1}{2} \left(\sqrt {\sum\limits_{k =  2}^{n} {\left| {c_k } \right|^2 } }  +\sqrt{ \sum\limits_{k =  2}^{n} {\left| {c_k } \right|^2 }  + 	\left(\left|c_1\right| +1\right)^2 }\right):={\bf MW},
	\end{align}
	provided that $
	\sum\limits_{k = 2}^{n} {\left| {c_k } \right|}\ge  \frac{2}{3}$.  {\bf Otherwise}, the result still valid even we don't have these assumption(s); i.e., if $ |c _{k}| \ge 1$ for some $k\ne 1$, then \eqref{eq4.13} it remains always true for both real and complex  coefficients.
	The analysis of the proof is mentioned in the proof of Theorem \ref{thm4}, as stated for $\omega\left(A_{11}\right)$, under the assumption that $c_1=a_{n+1}\ne0$.  
	
	\section{Observations, Discussion and Conclusion regarding {\bf MW}} 
	
	Consider $f(z)= z^6 + \frac{1}{4}z^5 + \frac{1}{9}z^4  + \frac{1}{16}z^3  + \frac{1}{25}z^2  + \frac{1}{36}z + \frac{1}{49}$, we find that the largest zero has modulus $={\bf 0.5447544053}$. Table \ref{tab4} shows that our result {\bf{MW}} is pretty close to the exact modulus and it is much better than all other upper bounds.\\ 
	
	\begin{table}
			\caption{\label{Table 4}\label{tab4}} 
		\begin{tabular}{ c c }
			\hline
			{\bf Mathematician}  \qquad &  \qquad {\bf Upper bound} \\\hline
			Cauchy \eqref{eq3.3}\qquad & \qquad $1.25$  \\    
			Carmichael and Mason \eqref{eq3.4} \qquad &  \qquad $1.039971167$ \\
			Montel \eqref{eq3.5}\qquad & \qquad $1$  \\
			Fujii and Kubo \eqref{eq3.6}\qquad & \qquad $1.066738881$  \\  
			Abdurakhmanov \eqref{eq3.7} \qquad &\qquad$1.198213950$\\
			Linden \eqref{eq3.8}\qquad &\qquad$2.091031073$\\
			Kittaneh \eqref{eq3.9}\qquad &\qquad $1.152835774$\\
			Abu-omar and Kittaneh \eqref{eq3.10}\qquad &\qquad $1.072449189$\\
			Al-Dolat \etal \eqref{eq3.11} \qquad & \qquad $1.573586825$  \\
			Theorem \ref{thm4}    \qquad & \qquad $1.219108946$  \\
			\bf{MW}    \qquad & \qquad $\bf{0.6721175730}$  \\
			\hline
		\end{tabular}
	\end{table}
	\centerline{}\centerline{}
	
	Another example shows the effienicy of our result \eqref{eq4.13}, consider $
	g\left( z \right) = z^6  + \frac{1}{3}z^4  + \frac{1}{4}z^3  + \frac{1}{9}z^2  + \frac{1}{{100}}$, we find that the largest zero has modulus $={\bf 0.7419983061}$. Table \ref{tab5}  shows that our result {\bf{MW}} is very close to the exact modulus.\\ 
	
	\begin{table}
			\caption{\label{Table 5}\label{tab5}} 
		\begin{tabular}{ c c }
			\hline
			{\bf Mathematician}  \qquad &  \qquad {\bf Upper bound} \\\hline
			Cauchy \eqref{eq3.3}\qquad & \qquad $1.333333333$  \\    
			Carmichael and Mason \eqref{eq3.4} \qquad &  \qquad $1.089062344$ \\
			Montel \eqref{eq3.5}\qquad & \qquad $1$  \\
			Fujii and Kubo \eqref{eq3.6}\qquad & \qquad $0.9939972629$  \\  
			Abdurakhmanov \eqref{eq3.7} \qquad &\qquad$1.167303296$\\
			Linden \eqref{eq3.8}\qquad &\qquad$2.078873251$\\
			Kittaneh \eqref{eq3.9}\qquad &\qquad $1.325435041$\\
			Abu-omar and Kittaneh \eqref{eq3.10}\qquad &\qquad $1.299097566$\\
			Al-Dolat \etal \eqref{eq3.11} \qquad & \qquad $1.581696908$  \\
			Theorem \ref{thm4}    \qquad & \qquad $1.351458429$  \\
			\bf{MW}    \qquad & \qquad $\bf{0.7647166222}$  \\
			\hline
		\end{tabular}
	\end{table}

	After long considerations and investigations, for almost all cases of $c_k$'s (with $|c_k|<1$), we find that \eqref{eq4.13} is very effective as long as $|c_k|<1$, besides the other mentioned assumptions.  The following example explains why we chose the presented conditions in \eqref{eq4.13}, and thus it solidifies and supports the reason of our selection. Let
	\begin{align*}
		h_1(z)= z^6   + \frac{1}{6}z^4  +  \frac{1}{5}z^2  +   \frac{1}{4}. 
	\end{align*}
	The real coefficients of $h_1$ don't respect our conditions in \eqref{eq4.13}. So that, we find that the largest zero has modulus $=0.8120242973$, but the upper bound in \eqref{eq4.13} $=0.7685824855$; which gives an incorrect upper bound, which means that \eqref{eq4.13} doesn't apply arbitrary for any $c_k$'s unless we have some restriction(s). To ensure  the correctness; and after long extrapolation by testing many cases; we established the investigated assumptions in \eqref{eq4.13} for all polynomials with real coefficients.\\
	
	In this regard, we should note that the assumptions about $c_k$'s whenever $|c_k|<1$; are sufficient only for real coefficients. However, if some of $c_k$'s are complex, then it is not true that we cannot apply \eqref{eq4.13}. For example, the polynomial 
	\begin{align*}
		h_2(z)= z^6 + \left(\frac{1}{4}+i\frac{1}{4}\right)z^5 + \frac{i}{9}z^4  + \frac{i}{16}z^3  + \frac{1}{25}z^2  + \frac{1}{36}z + \frac{1}{49}, 
	\end{align*}
	refuting three assumptions of \eqref{eq4.13}. Namely, we have some of $c_k$'s are complex, with
	$\sum\limits_{k = 2}^{n} {\left| {c_k } \right|}=0.5949422794 \le  \frac{2}{3}$, and $|c_3|<|c_2|$. At the same time, we find that the largest zero has modulus $=0.6408240287$. But  the upper bound in \eqref{eq4.13} $=0.7337440145$, which means that \eqref{eq4.13} still can be applied under another conditions as long as $|c_k|<1$. We are not able to determine the sufficient condition for complex coefficients in the case that $|c_k|<1$.
	
	Furthermore, the polynomial 
	\begin{align*}
		h_3(z)= z^6   + \frac{1}{4}z^4  +  \frac{1}{3}z^2  +   \frac{1}{4}, 
	\end{align*}
	is a good example showing that the conditions $|c_{k+1}|<|c_k|$ and $\sum\limits_{k = 2}^{n} {\left| {c_k } \right|}  =0.5833333333  \le\frac{2}{3}$, are not necessary but they are sufficient. In other words, the restricted conditions in \eqref{eq4.13} do not mean that there are no other examples that violate these conditions. Indeed, the largest zero of $h_3$  has modulus $=0.8310538215$, however the upper bound in \eqref{eq4.13} $=0.8671411790$. Therefore, \eqref{eq4.13} can be applied even we don't have our restrictions.
	
	In this matter, we would say our restricted assumptions in \eqref{eq4.13} are very sufficient and hold correctly over all reals as long as we have the mentioned assumptions in \eqref{eq4.13}; but they are not necessary, in general. We left the study  of sufficiency and necessity conditions in the  case that $|c_k|<1$ as an open problem to the interested reader. When some of $|c_k|>1$, $ k\ne 1$, \eqref{eq4.13} is always valid, regardless; what types of the coefficients we have, or their arrangement, or their sum.  Last but not least, the upper bound \eqref{eq4.13} shows a very impact efficiency especially in the case that all of $|c_k|$'s are $<1$.

	\centerline{}\centerline{}

	\noindent \textbf{Conflict of Interest:} The author declares that there is no conflict of interest. 
	

	

	\centerline{}\centerline{}

	\bibliographystyle{amsplain}

\begin{thebibliography}{10}
		
		\bibitem{Abdurakhmanov} A. A. Abdurakhmanov,  The geometry of the Hausdorff domain in localization problems for the spectrum of arbitrary matrices, {\em Math. USSR Sb.}, {\bf 59} (1988), 39--51. 
		
		\bibitem{AF2}   A. Abu-Omar and F. Kittaneh,  Estimates for the numerical radius and the spectral radius of the Frobenius companion matrix and bounds for the zeros of polynomials, {\em Ann. Funct. Anal.} {\bf5} (2014), 56--62.
		
		\bibitem{AF1} A. Abu-Omar and F. Kittaneh,  Numerical radius inequalities for $n\times n$ operator matrices, {\em Linear Algebra Appl.}, {\bf 468} (2015), 18--26.
		
		
		
		
		\bibitem{mwo1}   M. W. Alomari,  Refinement of some numerical radius inequalities for Hilbert space operators, {\em Linear and Multilinear Algebra}, {\bf 69} (7) (2021), 1208--1223.			
		
		\bibitem{mwo2}  M.W. Alomari, On the generalized mixed Schwarz inequality, {\em Proceedings of the Institute of Mathematics and Mechanics}, National Academy of Sciences of Azerbaijan, {\bf 46} (1) (2020), 3--15.
		
		
		
		\bibitem{BF}   W. Bani-Dom and F. Kittaneh, Numerical radius inequalities for operator matrices, {\em Linear Multilinear Algebra},  {\bf 57} (2009), 421--427.
		
		
		\bibitem{BG}A. B\"{o}ttcher and S.M. Grudsky, Spectral Properties of Banded Toeplitz Matrices, SIAM  2005. 
		
		
		\bibitem{DJH} M. Al-Dolat, I. Jaradat and B. Al-Husban,  A novel numerical radius upper bounds for $2 \times 2$, operator matrices, {\em Linear and Multilinear Algebra}, (2020). \url{https://doi.org/10.1080/03081087.2020.1756199}.
		
		
		
		\bibitem{Fujii} M. Fujii and F. Kubo, Buzano’s inequality and bounds for roots of algebraic equations, {\em Proc Amer. Math. Soc.}, {\bf117} (2) 1993, 359--361.
		
		
		
		\bibitem{HKS}
		O. Hirzallah, F. Kittaneh and K. Shebrawi,  Numerical radius inequalities for certain $2 \times 2$ operator matrices, {em Integr. Equ. Oper. Theory}, {\bf71} (2011), 129--147.
		
		
		\bibitem{HD} J.C. Hou and H.K. Du, \textit{Norm inequalities of positive operator matrices}, { \em Integral Equations Operator Theory}, {\bf22} (1995), 281--294.
		
		\bibitem{HJ} R.A. Horn and C.R. Johnson, Matrix analysis. Cambridge: Cambridge University Press; 1985.
		
		\bibitem{TK}  T. Kato,  Notes on some inequalities for linear operators, {\em Math. Ann.} {\bf125} (1952), 208--212.
		
		\bibitem{FK}  F. Kittaneh, Notes on some inequalities for Hilbert space operators, {em Publ. Res. Inst. Math. Sci.}, {\bf 24}  (1988), 283--293.
		
		
		
		
		\bibitem{FK2}  F. Kittaneh,  Bounds for the zeros of polynomials from matrix inequalities, {\em  Arch. Math.} (Basel) {\bf81} (2003), 601--608.
		
		
		
		
		
		\bibitem{L}  H. Linden, Bounds for zeros of polynomials using traces and determinants, {\em Seminarberichte Fachbereich Mathematik FeU Hagen}, {\bf69} (2000), 127--146.
		
		
		
		\bibitem{PB}  K. Paul and S. Bag, On numerical radius of a matrix and estimation of bounds for zeros of a polynomial, {\em Int. J. Math. Math. Sci.}, Volume 2012, Article ID 129132, 15 pages, 
		\url{https://doi.org/10.1155/2012/129132}.
		
		
		\bibitem{NPR}S. Noschese,  L. Pasquini and L. Reichel, Tridiagonal Toeplitz matrices: properties and novel applications, {\em Numer. Lin. Algebra}, Special Issue: Inverse Problems Dedicated to Biswa Datta, {\bf 20} (2) (2013), 302--326.  
		
		
	\end{thebibliography}

\end{document}